\newtheorem{prop}{Proposition}
\begin{document}

\thispagestyle{empty}
\bibliographystyle{siam}

\title{Ranking Hubs and Authorities using\\
 Matrix Functions}

\author {
Michele Benzi\thanks{
Department of Mathematics and Computer
Science, Emory University, Atlanta, Georgia 30322, USA
(benzi@mathcs.emory.edu).
The work of this author was supported
by National Science Foundation grant
DMS1115692.} \and
Ernesto Estrada\thanks{Department of Mathematics and Statistics
and Institute of Complex Systems,
University of Strathclyde, Glasgow G1 1XQ, UK 
(ernesto.estrada@strath.ac.uk). The work 
of this author was supported in part by the
New Professor's Fund, University of Strathclyde, 
and by the project \lq\lq Mathematics of Large Technological 
Evolving Networks (MOLTEN)\rq\rq\ funded by the 
Engineering and Physical Sciences Research Council 
and by the Research Councils UK Digital Economy programme, 
grant ref.~EP/I016058/1.
} \and
Christine Klymko\thanks{Department of Mathematics and Computer Science,
Emory University, Atlanta, Georgia 30322, USA
(cklymko@emory.edu). The work of this author was supported in part
by the Laney Graduate School of Arts and Sciences at Emory University
and by the National Science Foundation.}
}

\maketitle

\markboth{{\sc M.~Benzi, E.~Estrada, and C.~Klymko}}{{Ranking hubs and
authorities using matrix functions}}

\begin{abstract}
The notions of subgraph centrality and 
communicability, based on the exponential of the adjacency
matrix of the underlying graph, have been effectively used in the analysis
of undirected networks. In this paper we propose an extension
of these measures to directed networks, and we apply them
to the problem of ranking hubs and authorities.
  The extension is achieved by {\em bipartization}, i.e.,
the directed network is mapped onto a bipartite undirected 
network with twice as many nodes in order to obtain a network 
with a symmetric adjacency matrix.  
We explicitly determine the exponential of this adjacency matrix
in terms of the adjacency matrix of the original, directed network,
and we give an interpretation of centrality and communicability 
in this new context, leading to a technique 
for ranking hubs and authorities. 
 The matrix exponential method for computing hubs and authorities 
is compared to the well known HITS algorithm, both on small artificial
examples 
and on more realistic real-world networks. A few other ranking algorithms
are also discussed and compared with our technique. 
The use of Gaussian quadrature rules for calculating hub and
authority scores is discussed.
\end{abstract}

\begin{keywords} 
 hubs, authorities, centrality, communicability, matrix exponential,
directed networks, digraphs, bipartite graphs, HITS, Katz,
PageRank, Gauss quadrature
\end{keywords}

\begin{AMS} 05C50, 15A16, 65F60, 90B10.
\end{AMS}

\section{Introduction}
In recent years, the study of networks has become central to many 
disciplines \cite{Bocca06,Brandes,Caldarelli,Ebook11,Ebook10,New03,New10,NewBarWat03}.
 Networks can be used to describe and analyze many different types of interactions, from those between people (social networks), to the flow of goods across an area (transportation networks), to links between websites (the WWW graph), and
so forth.  In general, a network is a set of objects (nodes) and the connections between them (edges).  Often, research is focused on determining and describing important structural characteristics of a network or the interactions among 
its components. 

One common question in network analysis is to determine the most 
``important" nodes (or edges) in the network, also called 
{\em node} or {\em vertex} ({\em edge}) {\em centrality}.  
The interpretation of what is meant by ``important" can change 
from application to application. 
Due to this, many different measures of centrality have been developed.  
For an overview, see \cite{Brandes}. A closely related notion is that of {\em rank}
of a node in a network. There exist a number of definitions and algorithms
for computing rankings; see, e.g., 
\cite{franceschet,IRSurvey,KG11,Pagerank,Whos,vigna} for up-to-date overviews. 

The main notion of node centrality considered in this paper,
{\em subgraph centrality}, 
was introduced by Estrada and 
Rodr\'iguez-Vel\'azquez
in \cite{estradarodriguez05}. We refer readers to 
\cite{estradarodriguez05} for the
motivation behind this notion and for its name; see also the review article
\cite{NetworkProp}, and the discussion in section \ref{SC}.  
The interpretation of centrality described in \cite{NetworkProp} 
applies mostly to 
undirected networks. However, many important real-world networks 
(the World Wide Web, the Internet, citation networks, food webs, 
certain social networks, etc.)~are
directed. One goal of this paper is to extend the notions of centrality 
and communicability described in 
\cite{estradahatano2,NetworkProp} to directed networks, 
with an eye towards developing new 
ranking algorithms for, e.g., document collections, web pages, and so forth.
We further compare our approach with some standard algorithms, such as HITS
(see \cite{HITS}) and a few others. 
Methods of quickly determining hub and authority 
rankings using Gauss-type quadrature rules are also 
discussed.

\section{Basic notions}
\label{sec:digraphs}
Here we briefly review some basic graph-theoretic notions; 
we refer to \cite{Die00} for a
comprehensive treatment.
A \emph{graph} $G=(V,E)$ is formed by a set of nodes (vertices) 
$V$ and edges $E$ formed by unordered pairs of vertices.  
Every network is naturally associated with a graph $G=(V,E)$ 
where $|V|$ is the number of nodes in the network and $E$ is the 
collection of edges between objects,  
$E=\{(i,j) \, | {\rm \, there \,\, is \,\, an \,\, edge \,\, between \,\, 
node \,}\, i\, {\rm \, and \,} \, {\rm node\,}\,   j  \}$.  
The {\em degree} $d_i$
of a vertex $i$ is the number of edges incident to $i$.

A directed graph, or \emph{digraph} $G=(V,E)$ is formed by a 
set of vertices $V$ and 
edges $E$ formed by ordered pairs of vertices.  That is, 
$(i,j) \in E \not\Rightarrow (j,i) \in E$.  In the case of digraphs, 
which model directed networks, there are two types of degree.  
The {\em in-degree} of node $i$ is given by the number of edges 
which point to $i$.  The {\em out-degree} is given by the number 
of edges pointing out from $i$.

A \emph{walk} is a sequence of vertices $v_1, v_2, \ldots, v_k$ 
such that for $1\leq i < k$, there is an edge between $v_i$ and 
$v_{i+1}$ (or a directed edge from $v_i$ to $v_{i+1}$ in the 
case of a digraph).  Vertices and edges may be repeated. 
A walk is \emph{closed} if $v_1=v_k$.
A {\em path} is a walk consisting only of distinct vertices. 

A graph $G$ is {\em connected} if every pair of vertices is linked
by a path in $G$.
A digraph is {\em strongly connected} if for any pair of vertices 
$v_i$ and $v_k$ there is a walk starting at $v_i$ and ending at $v_k$.
A digraph is {\em weakly connected} if the graph obtained by 
disregarding the orientation of its edges is connected.
Unless otherwise specified, every digraph in this paper is simple
(unweighted with no multiple edges or loops and connected). Note,
however, that most
of the techniques and results in the paper can be extended without
difficulty to more general digraphs, in particular weighted ones.

The \emph{adjacency matrix} of a graph is a matrix $A \in \mathbb{R}^{|V| \times |V|}$ defined in the following way:
$$A = (a_{ij}); \quad a_{ij}=\left\{\begin{array}{ll}
1,& \textnormal{ if } (i,j) \textnormal{ is an edge in } G,\\
0, & \textnormal{ else. }
\end{array}\right .
$$

Under the conditions imposed on $G$, $A$ has zeros on the diagonal.  If $G$ is an 
undirected graph, $A$ will be a symmetric matrix and the eigenvalues will be real.  In the case of digraphs, $A$ is not symmetric and may have complex (non-real)
eigenvalues. 

\section{Kleinberg's HITS algorithm}
Here we briefly recall the classical {\em Hypertext Induced Topics Search} (HITS)
algorithm, first introduced by J.~Kleinberg in \cite{HITS}.
This algorithm provides the motivation for the extension of subgraph
centrality to directed graphs given in section \ref{ExtDig}.

\subsection{The basic iteration}
The HITS algorithm is based on the idea that in the World Wide Web, and indeed
in all document collections which can be represented by directed networks, 
there are two types of important nodes: {\em hubs} and {\em authorities}.  
Hubs are nodes which point to many nodes of the type considered important.  Authorities are these important nodes.  From this comes a circular definition: good hubs are those which point to many good authorities and good authorities are those pointed to by many good hubs.

Thus, the HITS ranking relies on an iterative method converging to a stationary solution.  Each node $i$ in the network is assigned two non-negative weights, an {\em authority weight} $x_i$ and a {\em hub weight} $y_i$.  To begin with, each $x_i$ and $y_i$ is given
an arbitrary nonzero value.  Then, the weights are updated in the following ways:
\begin{equation}
x^{(k)}_i = \sum_{j:(j,i) \in E} y^{(k-1)}_j {\,\,\,\,\,\ \rm and \,\,\,\,\,\,} y^{(k)}_i = \sum_{j:(i,j) \in E} x^{(k)}_j {\rm \,\,\,\,\, for \,\,\,\,\,} k=1,2,3...
\end{equation}
The weights are then normalized so that $\sum_j (x_j^{(k)})^2 =1$ and 
$\sum_j (y_j^{(k)})^2 =1$.

The above iterations occur sequentially and it can be shown that, under
mild conditions, both sequences of 
vectors $\{x^{(k)}\}$ and $\{y^{(k)}\}$ converge as $k \to \infty$.  
In practice, the iterative process is continued 
until there is
no significant change between consecutive iterates.

This iteration sequence shows the natural dependence relationship between hubs and authorities: if a node $i$ points to many nodes with large $x$-values, it receives a large $y$-value and, if it is pointed to by many nodes with large $y$-values, it receives a large $x$-value.

In terms of matrices, the equation (3.1) becomes: 
$x^{(k)} = A^Ty^{(k-1)}$ and $y^{(k)} = Ax^{(k)}$, followed by
normalization in the 2-norm.
This iterative process can be expressed as 
\begin{equation}
x^{(k)} = c_kA^TAx^{(k-1)} {\rm \,\,\,\,\, and \,\,\,\,\,} y^{(k)} = c_k'AA^Ty^{(k-1)},
\end{equation}
where $c_k$ and $c_k'$ are normalization factors.
A typical choice for the inizialization vectors $x^{(0)}$, $y^{(0)}$ 
would be the constant vector
$$x^{(0)} = y^{(0)} = [1/\sqrt{n},\, 1/\sqrt{n},\, \ldots ,1/\sqrt{n}],$$ 
see \cite{EIHITS}.
Hence, HITS is just an iterative power method to 
compute the dominant eigenvector for $A^TA$ and for $AA^T$.  
The authority scores are determined by the 
entries of the dominat eigenvector of the matrix $A^TA$, 
which is called the {\em authority matrix} and the hub 
scores are determined by the entries of the
dominant eigenvector of $AA^T$, called the {\em hub matrix}. Recall that 
the eigenvalues of  both $A^TA$ and $AA^T$ are the squares of the 
singular values of $A$. Also, the eigenvectors of $A^TA$ are the right singular
vectors of $A$, and the eigenvectors of $AA^T$ are the left singular 
vectors of $A$.

\subsection{HITS reformulation}
\label{sec:newHITS}
In a digraph the adjacency matrix $A$ is generally nonsymmetric, 
however, the two matrices used in the HITS algorithm ($A^TA$ and $AA^T$) are symmetric.  Note that, setting
$$ \mathcal{A} =  \left( \begin{array}{cc}
0 & A \\
A^T & 0  \end{array} \right)\,,$$
a symmetric matrix is obtained.  Now, 
$$\mathcal{A}^2 =   \left( \begin{array}{cc} AA^T & 0 \\ 0 & A^TA  \end{array} \right); {\rm \,\,\,\,\,\,\,} \mathcal{A}^3 =   \left( \begin{array}{cc} 0 & AA^TA \\ A^TAA^T & 0  \end{array} \right). $$
In general, 
$$\mathcal{A}^{2k} =   \left( \begin{array}{cc} (AA^T)^k & 0 \\ 0 & (A^TA)^k  \end{array} \right); {\rm \,\,\,\,\,\,\,} \mathcal{A}^{2k+1} =   \left( \begin{array}{cc} 0 & A(A^TA)^k \\ (A^TA)^kA^T & 0  \end{array} \right). $$

Applying HITS to this matrix $\mathcal{A}$, $\mathcal{A}^T = \mathcal{A}$ 
so $\mathcal{A}^T\mathcal{A} = \mathcal{A}\mathcal{A}^T = \mathcal{A}^2$ 
and introducing the vector $u^{(k)} = \left( \begin{array}{c} y^{(k)}\\x^{(k)}
\end{array} \right)$
for $k=1,2,3,\ldots$, equation (3.2) becomes
\begin{equation}
u^{(k)} = \mathcal{A}^2 u^{(k-1)} =  \left( \begin{array}{cc} AA^T & 0 \\ 0 & A^TA  
\end{array} \right) u^{(k-1)},
\end{equation}
followed by normalization of the two vector components of $u^{(k)}$ so that 
each has 2-norm equal to 1.
Now, if $A$ is an $n \times n$ matrix, $\mathcal{A}$ 
is $2n \times 2n$ and vector $u^{(k)}$ is in 
$\mathbb{R}^{2n}$.  The first $n$ entries of $u^{(k)}$ 
correspond to the hub rankings of the nodes, while 
the last $n$ entries give the authority rankings.
Under suitable assumptions (see the discussion in \cite[Chapter 11.3]{Pagerank}),
as $k\to \infty$ the
sequence $\{u^{(k)}\}$ converges to the dominant nonnegative
eigenvector of $\cal A$, which yields the desired hub and
authority rankings. 

Hence, in HITS only information obtained from the dominant
eigenvector of $\cal A$ is used. It is natural to expect
that taking into account spectral information corresponding
to the remaining eigenvalues and eigenvectors of $\cal A$ 
may lead to improved results.

Among the limitations of HITS, we mention the possible dependence
of the rankings on the choice of the initial vectors $x^{(0)}$,
$y^{(0)}$, see \cite{EIHITS} for examples of this, and the 
fact that HITS hub/authority rankings tend to be ``degree-biased",
i.e., they are strongly correlated with
the out-/in-degrees of the corresponding nodes \cite{Dingetal}. The
latter property is in fact shared by most eigenvector-based rankings;
for a discussion of this phenomenon in the case of scale-free graphs,
see \cite{MiPapa}.

\section{Subgraph centralities and communicabilities}\label{SC}
In \cite{NetworkProp}, the authors review several 
measures to rank the nodes in an undirected network $A$ based on
the use of matrix functions, such as the matrix exponential $e^A$.
The {\em subgraph centrality} \cite{estradarodriguez05} of node $i$ is given by 
$[e^A]_{ii}$ and the {\em communicability} \cite{estradahatano2} between nodes 
$i$ and $j$ ($i \neq j$) is given by $[e^A]_{ij}$.
Nodes $i$ corresponding to higher values of
$[e^A]_{ii}$ are considered more important than nodes
corresponding to lower values.
Large values of $[e^A]_{ij}$ indicate that information flows
more easily between nodes $i$ and $j$ than between pairs
of nodes corresponding to lower values of the same quantity.
The {\em Estrada index} of the 
graph is given by ${\rm Tr}\,(e^A)=\sum_{i=1}^n[e^A]_{ii}$. 
This index, which provides a global
characterization of a network, is analogous to the {\em partition
function} in statistical mechanics and plays an important role in the
study of networks at the {\em macroscopic} level: 
quantities such as the
{\em natural connectivity}, the {\em total energy}, the {\em Helmholtz 
free energy} and the {\em entropy} of a network can all be expressed
in terms of the Estrada index \cite{EHB11}. 

Consider the power series expansion of $e^A$,
\begin{equation}
e^A = I + A + \frac{A^2}{2!} + \frac{A^3}{3!} + \cdots + \frac{A^k}{k!} + \cdots
\end{equation}
From graph theory, it is well known that if $A$ is the adjacency matrix of an undirected graph, $[A^k]_{ij} = [A^k]_{ji}$ counts the number of walks of length $k$ between nodes $i$ and $j$.  Thus, the subgraph centrality of node $i$, $[e^A]_{ii}$, counts the total number of closed walks starting at node $i$, penalizing longer walks by scaling walks of length $k$ by the factor $\frac{1}{k!}$.  The communicability between nodes $i$ and $j$, $[e^A]_{ij}$, counts the number of walks between nodes $i$ and $j$, again scaling walks of length $k$ by a factor of $\frac{1}{k!}$.

It is worth mentioning that normalization of  
the diagonal entries of $e^A$ by ${\rm Tr}\,(e^A)$ yields a
probability distribution on the nodes of the network, which
can be given the following interpretation: the $i$th diagonal entry
of $e^A/{\rm Tr}\,(e^A)$ is the probability of selecting any weighted 
self-returning (closed) walk that starts and ends at node $i$ among all  
the weighted self-returning walks that start at any node and return
to the same node. 
The weights used (factorial penalization) ensure that the shortest 
walks receive more weight than the longer ones: hence, the subgraph
centrality of node $i$ is proportional to the
probability of finding a random walker walking \lq\lq nearby\rq\rq\ 
node $i$. 

Although the matrix exponential is certainly well-defined for any
matrix, whether symmetric or not, the {\em interpretation} of the   
notion of subgraph centrality 
for directed networks can be problematic. 
To see this, consider the directed path graph consisting of $n$ nodes,
with edge set $E = \{(1,2)\,, (2,3)\,, \ldots , (n-1,n)\}$ and
adjacency matrix
\begin{equation}\label{AP}
A=\left(\begin{array}{ccccc}
0 & 1 & 0 &\cdots & 0\\
0 & 0 & 1 & \cdots & 0\\
\vdots & \ddots & \ddots & \ddots &\vdots \\
0 & 0 & 0 & \cdots & 1\\
0 & 0 & 0 & \cdots & 0 \end{array}\right) \,.
\end{equation}
The entries of $e^A$ are given by
$$[e^A]_{ij}=\left\{\begin{array}{ll}
1/(j-i)!,& \textnormal{ if } j\ge i,\\
0, & \textnormal{ else. }
\end{array}\right .
$$
In particular, the diagonal entries of $e^A$ are all equal to 1. 
Therefore, it is impossible to distinguish any of the nodes from
the others on the basis of this centrality measure; yet, it
is clear that the first and last node are rather special, and
certainly more \lq\lq peripheral\rq\rq\ (less \lq\lq central\rq\rq)   
than the other nodes.
Also, we note that the probabilistic interpretation given above
for undirected graphs is no longer meaningful for this example. 
Part of the
problem, of course, is that the path digraph contains no closed
walks. In the next section we show one way to extend the notion
of subgraph centrality to digraphs that is immune from such 
shortcomings, and correctly differentiates between nodes in the example
above.
(On the other hand, it is interesting to note that the interpretation
of the off-diagonal
entries of $e^A$ in terms of communicabilities is straightforward
for the directed path. All entries of $e^A$ below the
main diagonal are zero, reflecting the fact that information can only flow
from a node to higher-numbered nodes. Also, the entries of $e^A$ decay
rapidly away from the main diagonal, reflecting 
the fact that the ``ease" of communication 
between a node and a higher numbered one decreases rapidly with the distance.)

Another issue when extending the notions of subgraph centrality and
communicability to directed graphs is that
computational difficulties may arise. While the
computations involved do not pose a problem for small 
networks, many real-world networks are large enough that 
directly computing the exponential of the adjacency matrix 
is prohibitive.  In \cite{BenziBoito}, 
techniques for bounding and estimating individual entries of the 
matrix exponential using Gaussian quadrature rules are discussed; see also
\cite{bonchietal} and section \ref{sec:approx} below.  
The ability to find upper and lower bounds for the 
entries requires that the matrix be symmetric, 
thus these bounds cannot be directly computed using the 
adjacency matrix of a directed network.
Again, these difficulties can be
circumvented using the approach discussed in the 
next section.

\section{An extension to digraphs}\label{ExtDig}
Although the techniques described in 
\cite{BenziBoito} cannot be 
directly applied to non-symmetric matrices, setting
\begin{equation}\label{calA}
\mathcal{A} =  \left( \begin{array}{cc}
0 & A \\
A^T & 0  \end{array} \right)
\end{equation}
produces a symmetric matrix $\mathcal{A}$ and, thus, upper 
and lower bounds of individual entries of $e^{\mathcal{A}}$ 
can be computed.  In Proposition \ref{prop1} 
below we relate $e^{\mathcal{A}}$ to the underlying hub 
and authority structure of 
the original digraph. By $B^{\dagger}$ we denote the Moore--Penrose 
generalized inverse of 
matrix $B$.\\

\begin{prop}\label{prop1}
Let $\mathcal{A}$ be as described in equation (\ref{calA}).  Then,
$$e^{\mathcal{A}} =  \left( \begin{array}{cc}
\cosh{\left(\sqrt{AA^T}\right)} & A\left(\sqrt{A^TA }\right)^{\dagger}\sinh{\left(\sqrt{A^TA }\right)} \\
\sinh{\left(\sqrt{A^TA}\right)}\left(\sqrt{A^TA }\right)^{\dagger}A^T & \cosh{\left(\sqrt{A^TA}\right)}  \end{array} \right).$$
\end{prop}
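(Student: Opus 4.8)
The plan is to expand $e^{\mathcal{A}}=\sum_{k\ge 0}\mathcal{A}^k/k!$ as a power series and exploit the block formulas for $\mathcal{A}^{2k}$ and $\mathcal{A}^{2k+1}$ already recorded above. Since the exponential series converges absolutely for any matrix, I may split it into even and odd parts and rearrange freely. The even powers are block diagonal and the odd powers are block anti-diagonal, so the two diagonal blocks and the two off-diagonal blocks of $e^{\mathcal{A}}$ can be identified independently.

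First I would treat the diagonal blocks. Collecting even powers, the $(1,1)$ block is $\sum_{k\ge 0}(AA^T)^k/(2k)!$ and the $(2,2)$ block is $\sum_{k\ge 0}(A^TA)^k/(2k)!$. Because $AA^T$ and $A^TA$ are symmetric positive semidefinite, each has a unique positive semidefinite square root, and for such $B$ one has $\cosh(\sqrt{B})=\sum_{k\ge 0}(\sqrt{B})^{2k}/(2k)!=\sum_{k\ge 0}B^{k}/(2k)!$. Hence the diagonal blocks are exactly $\cosh(\sqrt{AA^T})$ and $\cosh(\sqrt{A^TA})$; no generalized inverse enters here. For the off-diagonal blocks I collect odd powers: the $(1,2)$ block is $\sum_{k\ge 0}A(A^TA)^k/(2k+1)! = A\,g(A^TA)$ and the $(2,1)$ block is $g(A^TA)\,A^T$, where I abbreviate $g(B):=\sum_{k\ge 0}B^{k}/(2k+1)!$. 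The target expression instead features $h(B):=(\sqrt{B})^{\dagger}\sinh(\sqrt{B})$, and since $(\sqrt{B})^{\dagger}$ and $\sinh(\sqrt{B})$ are both functions of $B$ they commute, so $h(B)=\sinh(\sqrt{B})(\sqrt{B})^{\dagger}$ as well.

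The one genuine subtlety is the Moore--Penrose inverse, which is needed precisely when $A$ (hence $A^TA$) is singular. Writing $\sinh(\sqrt{B})=\sqrt{B}\,g(B)$ shows that $h(B)$ and $g(B)$ agree on the range of $B$ but differ on $\ker B$: for $v\in\ker B$ one has $g(B)v=v$ while $\sqrt{B}\,v=0$ forces $\sinh(\sqrt{B})v=0$ and thus $h(B)v=0$. Consequently $g(B)-h(B)$ equals the orthogonal projector $P$ onto $\ker B=\ker(A^TA)$. The main obstacle is therefore to argue that this discrepancy is annihilated by the adjacent factor of $A$ or $A^T$, so that the pseudoinverse formula reproduces the series after all.

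To finish I would invoke the two standard identities $\ker(A^TA)=\ker A$ and $\mathrm{range}(A^TA)=\mathrm{range}(A^T)$. From the first, $P$ projects onto $\ker A$, so $AP=0$ and $A\,g(A^TA)=A\,h(A^TA)=A(\sqrt{A^TA})^{\dagger}\sinh(\sqrt{A^TA})$, giving the $(1,2)$ block. From the second, the columns of $A^T$ lie in $\mathrm{range}(A^TA)=(\ker A^TA)^{\perp}$, so $PA^T=0$ and $g(A^TA)\,A^T=h(A^TA)\,A^T=\sinh(\sqrt{A^TA})(\sqrt{A^TA})^{\dagger}A^T$, giving the $(2,1)$ block. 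Assembling the four identified blocks yields the stated formula for $e^{\mathcal{A}}$.
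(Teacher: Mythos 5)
Your proof is correct, but it follows a genuinely different route from the paper's. The paper factors $\mathcal{A}$ through the SVD $A=U\Sigma V^T$, writing $\mathcal{A}=\left(\begin{smallmatrix} U & 0\\ 0 & V\end{smallmatrix}\right)\left(\begin{smallmatrix} 0 & \Sigma\\ \Sigma & 0\end{smallmatrix}\right)\left(\begin{smallmatrix} U^T & 0\\ 0 & V^T\end{smallmatrix}\right)$, computes the exponential of the middle block anti-diagonal factor as $\left(\begin{smallmatrix} \cosh(\Sigma) & \sinh(\Sigma)\\ \sinh(\Sigma) & \cosh(\Sigma)\end{smallmatrix}\right)$ via the $\cosh+\sinh$ splitting, and then conjugates back, remarking only that the off-diagonal identities ``can be easily checked using the SVD.'' You instead work directly with the power series, using the block formulas for $\mathcal{A}^{2k}$ and $\mathcal{A}^{2k+1}$, and you identify each block as a convergent series in $AA^T$ or $A^TA$. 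The main thing your argument buys is a fully explicit treatment of the one point the paper glosses over: why the Moore--Penrose pseudoinverse in $A(\sqrt{A^TA})^{\dagger}\sinh(\sqrt{A^TA})$ reproduces the series $\sum_k A(A^TA)^k/(2k+1)!$ even when $A^TA$ is singular. Your observation that $g(B)-(\sqrt{B})^{\dagger}\sinh(\sqrt{B})$ is exactly the orthogonal projector onto $\ker B$, which is then annihilated by the adjacent $A$ (since $\ker(A^TA)=\ker A$) or $A^T$ (since $\operatorname{range}(A^T)=\operatorname{range}(A^TA)$), is precisely the content hidden in the paper's ``easily checked'' remark, and you supply it rigorously. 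What the paper's SVD route buys in exchange is compactness and a direct view of the spectral structure of $\mathcal{A}$ (eigenvalues $\pm\sigma_i$), which the authors exploit later, e.g.\ for the trace identity $\operatorname{Tr}(e^{\mathcal{A}})=2\sum_i\cosh(\sigma_i)$.
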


\begin{proof}
Let $A = U\Sigma V^T$ be the SVD of the original (non-symmetric) adjacency matrix $A$.  Then, $\mathcal{A}$ can be decomposed as
 $\mathcal{A} = \left( \begin{array}{cc} U & 0 \\ 0 & V  \end{array} \right) \left( \begin{array}{cc} 0 & \Sigma \\ \Sigma & 0  \end{array} \right) \left( \begin{array}{cc} U^T & 0 \\ 0 & V^T  \end{array} \right)$.  Hence, 
\begin{equation} \label{expA}
e^{\mathcal{A}} = \left( \begin{array}{cc} U & 0 \\ 0 & V  \end{array}\right) \exp{\left( \begin{array}{cc} 0 & \Sigma \\ \Sigma & 0  \end{array} \right)} \left( \begin{array}{cc} U^T & 0 \\ 0 & V^T  \end{array} \right).
\end{equation}
Now,
$$\exp{\left( \begin{array}{cc} 0 & \Sigma \\ \Sigma & 0  \end{array} \right)} = \cosh{\left( \begin{array}{cc} 0 & \Sigma \\ \Sigma & 0  \end{array} \right)} + \sinh{\left( \begin{array}{cc} 0 & \Sigma \\ \Sigma & 0  \end{array} \right)}$$ $$= {\left( \begin{array}{cc} \cosh(\Sigma) & 0 \\ 0 & \cosh(\Sigma)  \end{array} \right)} + {\left( \begin{array}{cc} 0 & \sinh(\Sigma) \\ \sinh(\Sigma) & 0  \end{array} \right)}. $$
Thus,
\begin{equation} \label{expS}
\exp{\left( \begin{array}{cc} 0 & \Sigma \\ \Sigma & 0  \end{array} \right)} = {\left( \begin{array}{cc} \cosh(\Sigma) & \sinh(\Sigma) \\ \sinh(\Sigma) & \cosh(\Sigma) \end{array} \right)}.
 \end{equation}
Putting together equations (\ref{expA}) and (\ref{expS}),
$$e^{\mathcal{A}} =   \left( \begin{array}{cc} U & 0 \\ 0 & V  \end{array}\right)  {\left( \begin{array}{cc} \cosh(\Sigma) & \sinh(\Sigma) \\ \sinh(\Sigma) & \cosh(\Sigma) \end{array} \right)} \left( \begin{array}{cc} U^T & 0 \\ 0 & V^T  \end{array} \right)$$ $$=  \left( \begin{array}{cc}
\cosh{\left(\sqrt{AA^T}\right)} & A\left(\sqrt{A^TA }\right)^{\dagger}\sinh{\left(\sqrt{A^TA }\right)} \\
\sinh{\left(\sqrt{A^TA}\right)}\left(\sqrt{A^TA}\right)^{\dagger}A^T & \cosh{\left(\sqrt{A^TA}\right)}  \end{array} \right).$$
The identities involving the off-diagonal blocks can be easily checked using
the SVD of $A$.
\end{proof}

\subsection{Interpretation of diagonal entries}
\label{sec:interp}
In the context of undirected networks, 
the interpretation of the entries of the matrix exponential 
in terms of subgraph centralities and communicabilities is well-established,
see e.g.~\cite{NetworkProp}.  
In the case of directed networks and $e^{\mathcal{A}}$, things are not as clear.  The network behind $\mathcal{A}$ can be thought of as follows: take the vertices from the original network $A$ and make two copies of them, $V$ and $V'$.  Then, undirected edges exist between the two sets based on the following rule: $E' = \{(i,j') | {\rm \, there \, is \, a \, directed\, edge \, from \,} i {\rm \,to\,} j {\rm \,in \,the \,}  {\rm original \,network}\}$.  This creates a bipartite graph with $2n$ nodes: $1, 2, \ldots ,n,n+1,n+2, \ldots ,2n$. We denote by $V(\mathcal{A})$ this set of
nodes. The use of bipartization to treat rectangular and structurally
unsymmetric matrices is of course standard in numerical linear algebra.

In the undirected case, each node had only one role to play in the network: any information that came into the node could leave by any edge.  In the directed case, there are two roles for each node: that of a hub and that of an authority.  It is unlikely that a high ranking hub will also be a high ranking authority, but each node can still be seen as acting in both of these roles.  In the network $\mathcal{A}$, the two aspects of each node are separated.  Nodes 
$1, 2,\ldots ,n$ in $V(\mathcal{A})$ represent the original nodes 
in their role as hubs and nodes $n+1, n+2,\ldots, 2n$ in $V(\mathcal{A})$ represent the original nodes in their role as authorities.

Given a directed network, an {\em alternating walk} of length $k$, starting with an out-edge, from node $v_1$ to node $v_{k+1}$  is a list of nodes $v_1, v_2, ..., v_{k+1}$ such that there exists edge $(v_i, v_{i+1})$ if $i$ is odd and edge $(v_{i+1}, v_i)$ if $i$ is even:
$$v_1 \rightarrow v_2 \leftarrow v_3 \rightarrow \cdots $$
An {\em alternating walk} of length $k$, starting with an in-edge, from node $v_1$ to node $v_{k+1}$ in a directed network is a list of nodes $v_1, v_2, ..., v_{k+1}$ such that there exists edge $(v_{i+1}, v_i)$ if $i$ is odd and edge $(v_i, v_{i+1})$ if $i$ is even:
$$v_1 \leftarrow v_2 \rightarrow v_3 \leftarrow \cdots $$
From graph theory (see also \cite{croftetal}), it is known that $[AA^TA \ldots]_{ij}$ (where there are $k$ matrices being multiplied) counts the number of alternating walks of length $k$, starting with an out-edge, from node $i$ to node $j$, 
whereas  $[A^TAA^T\ldots]_{ij}$ (where there are $k$ matrices being multiplied) counts the number of alternating walks of length $k$, starting with an in-edge, from node $i$ to node $j$.  That is, $[(AA^T)^k]_{ij}$ and $[(A^TA)^k]_{ij}$ count the number of alternating walks of length $2k$.

In the original network $A$, if node $i$ is a good hub, it will point to many good authorities, which will in turn be pointed at by many hubs.  These hubs will also point to many authorities, which will again be pointed at by many other hubs.  Thus, if $i$ is a good hub, it will show up many times in the sets of hubs described above.  That is, there should be many even length alternating walks, starting with an out-edge, from node $i$ to itself.  Giving a walk of length $2k$ a weight of 
$\frac{1}{(2k)!}$, these walks can be counted using the $(i,i)$ entry of the matrix
$$I + \frac{AA^T}{2!} + \frac{AA^TAA^T}{4!} + \cdots + \frac{(AA^T)^{k}}{(2k)!}+ \cdots$$
Letting $A = U\Sigma V^T$ be the SVD of $A$, this becomes:
$$U\left(I + \frac{\Sigma ^2}{2!} + \frac{\Sigma ^4}{4!} + \cdots +
\frac{\Sigma^{2k}}{(2k)!}+ \cdots\right)U^T$$ $$= U\cosh(\Sigma)U^T
= \cosh(\sqrt{AA^T})\,.$$

The {\em hub centrality} of node $i$ (in the original network) 
is thus given by $$[e^{\mathcal{A}}]_{ii} = [\cosh(\sqrt{AA^T})]_{ii}.$$  
This measures how well node $i$ transmits information to the 
authoritative nodes in the network.

Similarly, if node $i$ is a good authority, there will be many even length alternating walks, starting with an in-edge, from node $i$ to itself.  Giving a walk of length $2k$ a weight of $\frac{1}{(2k)!}$, these walks can be counted using the $(i,i)$ entry of $\cosh(\sqrt{A^TA})$.  

Hence, the {\em authority centrality} of node $i$ is given by 
$$[e^{\mathcal{A}}]_{n+i,n+i}= [\cosh(\sqrt{A^TA})]_{ii}.$$  
It measures how well node $i$ receives information from the hubs in the network.

Note that the traces of the two diagonal blocks in $e^{\cal A}$ are identical,
so each accounts for half of the Estrada index of the bipartite graph.
Also, recalling the well-known fact that the eigenvalues of $\cal A$
are $\pm \sigma_i$ where $\sigma_i$ denotes the singular values of $A$,
we have 
$${\rm Tr}\,(e^{\cal A}) = 
\sum_{i=1}^n e^{\sigma_i} + \sum_{i=1}^n e^{-\sigma_i} 
 = 2\sum_{i=1}^n \cosh\,(\sigma_i),$$
an identity that can also be obtained directly from the
expression for $e^{\cal A}$ given in Proposition \ref{prop1}.

Returning to the example of the directed path graph with adjacency
matrix $A$ given by (\ref{AP}), one finds that using the diagonal entries of
$e^{\cal A}$ to rank the nodes gives node 1 as the least authoritative
node, and node $n$ as the one with the lowest hub ranking, with all
the other nodes being tied. Thus we see that, while $e^A$ fails to 
differentiate between the nodes of this graph, using $e^{\cal A}$
yields a very reasonable hub/authority ranking of the nodes.

\subsection{Interpretation of off-diagonal entries}
Although not used in the remainder of this paper, for the sake of
completeness we give here
an interpretation of the off-diagonal entries
of $e^{\cal A}$. As we will see, this interpretation is
rather different from the one usually given for the off-diagonal
entries of $e^A$, and provides information of a different nature
on the structure of the underlying graph. 

In discussing the off-diagonal entries of $\mathcal{A}$, there are three 
blocks to consider.  First, there are the off-diagonal entries of the upper-left block, $\cosh(\sqrt{AA^T})$, then there are the off-diagonal entries of the lower-right block, $\cosh(\sqrt{A^TA})$.  Finally, there is the off-diagonal block, 
$ A\left(\sqrt{A^TA }\right)^{\dagger}\sinh \left(\sqrt{A^TA }\right)$
(the fourth block in $e^{\cal A}$ being its transpose).

From section \ref{sec:interp}, 
$[e^{\mathcal{A}}]_{ij} = [\cosh(\sqrt{AA^T})]_{ij}$, $1 \leq i,j \leq n$, counts the number of even length alternating walks, starting with an out-edge, from node $i$ to node $j$, weighting walks of length $2k$ by a factor of $\frac{1}{(2k)!}$.  When $i \neq j$, these entries measure how similar nodes $i$ and $j$ are as hubs.  That is, if nodes $i$ and $j$ point to many of the same nodes, there will be many short even length alternating walks between them.

The {\em hub communicability} between nodes $i$ and $j$, $1 \leq i,j \leq n$, is given by 
$$[e^{\mathcal{A}}]_{ij} = [\cosh(\sqrt{AA^T})]_{ij}$$
This measures how similar nodes $i$ and $j$ are in their roles as hubs.  That is, a larger value of hub communicability between nodes $i$ and $j$ indicates that they point to many of the same authorities.  In other words, they point to nodes which are authorities on the same subjects.

Similarly, $[e^{\mathcal{A}}]_{n+i,n+j} = [\cosh(\sqrt{A^TA})]_{ij}$, $1 \leq i,j \leq n$, counts the number of even length alternating walks, starting with an in-edge, from node $i$ to node $j$, also weighing walks of length $2k$ by a factor of $\frac{1}{(2k)!}$.  When $i \neq j$, these entries measure how similar the two nodes are as authorities.  If $i$ and $j$ are pointed at by many of the same hubs, there will be many short even length alternating walks between them.

The {\em authority communicability} between nodes $i$ and $j$, $1 \leq i,j, \leq n$, is given by 
$$[e^{\mathcal{A}}]_{i+n,j+n} = [\cosh(\sqrt{A^TA})]_{ij}$$
This measures how similar nodes $i$ and $j$ are in their roles as authorities.  That is, a larger value of authority communicability between nodes $i$ and $j$ means that they are pointed to by many of the same hubs and, as such, are likely to contain information on the same subjects.

Let us now consider the off-diagonal blocks of $\mathcal{A}$. 
Here, $[\sinh(\sqrt{A^TA})]_{ij}$ counts the number of odd length alternating walks, starting with an out-edge, from node $i$ to node $j$, weighing walks of length 
$2k+1$ by $\frac{1}{(2k+1)!}$.  This measures the communicability between node $i$ as a hub and node $j$ as an authority.

The {\em hub-authority communicability} between nodes $i$ and $j$ (that is, the communicability between node $i$ as a hub and node $j$ as an authority) is given by:
$$[e^{\mathcal{A}}]_{i,n+j} = [A\left(\sqrt{A^TA }\right)^{\dagger}\sinh \left(\sqrt{A^TA }\right)]_{ij} $$ $$= [\sinh{\left(\sqrt{A^TA}\right)}\left(\sqrt{A^TA}\right)^{\dagger}A^T]_{ji} = [e^{\mathcal{A}}]_{n+j, i}.$$
A large hub-authority communicability between nodes $i$ and $j$ means that they are likely in the same \lq\lq part\rq\rq\ of the directed network: node $i$ tends to point to nodes that contain information similar to that on which node $j$ is an authority.

\subsection{Relationship with HITS}\label{hits_comp}
As described in \ref{sec:newHITS}, the HITS ranking of nodes 
as hubs and authorities uses only information from the dominant 
eigenvector of $\mathcal{A}$. Here we show that when using the
diagonal of $e^{\mathcal A}$, we exploit information contained
in all the eigenvectors of $\mathcal{A}$; moreover, the HITS rankings
can be regarded as an approximation of those given by the
diagonal entries of $e^{\mathcal A}$.

Assume the eigenvalues of $\mathcal{A}$ can be ordered as
$\lambda_{1} > \lambda_{2} \geq \lambda_{3} \geq \cdots \geq 
\lambda_{2n}$.
Then, $\mathcal{A}$ can be written as $\mathcal{A} = \sum_{i=1}^{2n} 
\lambda_iu_iu_i^T$ where $u_1, u_2,\ldots, u_{2n}$ are the 
normalized eigenvectors of $\mathcal{A}$.  Taking the exponential of 
$\mathcal{A}$, we get: 
$$e^{\mathcal{A}} = \sum_{i=1}^{2n} 
e^{\lambda_i}u_iu_i^T = e^{\lambda_{1}}u_1u_1^T + \sum_{i=2}^{2n} 
e^{\lambda_i}u_iu_i^T.$$

Now, the hub and authority rankings come from the diagonal entries 
of $e^{\mathcal{A}}$: 
$${\rm diag}\,(e^{\mathcal{A}}) = e^{\lambda_{1}}
{\rm diag}\,(u_1u_1^T) + \sum_{i=2}^{2n}e^{\lambda_i}{\rm diag}\,(u_iu_i^T).$$ 
Rescaling the hub and authority scores by $e^{\lambda_{1}}$ does not alter
the rankings; hence, we can instead consider
$${\rm diag}\,(e^{-\lambda_{1}}e^{\mathcal{A}}) = 
{\rm diag}\,(e^{\mathcal{A}-\lambda_{1}I}) =
{\rm diag}\,(u_1u_1^T) + \sum_{i=2}^{2n}e^{\lambda_i-\lambda_{1}}{\rm diag}\,(u_iu_i^T).$$
Now, the diagonal entries of the rank-one matrix $u_1u_1^T$ are 
just the squares of the (nonnegative) entries of the dominant eigenvector of
$\mathcal{A}$; hence,
the rankings provided by the first
term in the expansion of $e^{\mathcal{A}}$ in the eigenbasis of $\cal A$ are 
precisely those given by HITS.

It is also clear that if $\lambda_{1} \gg \lambda_{2}$, 
then the rankings provided
by the diagonal entries of $e^{\mathcal{A}}$ are unlikely to differ much from
those of HITS, since the weights $e^{\lambda_i-\lambda_{1}}$ will be tiny, for
all $i=2,\ldots ,2n$. 
 Conversely, if the gap between $\lambda_{1}$ and the rest of the
spectrum is small ($\lambda_{1}\approx \lambda_{2}$), then the contribution 
from the remaining eigenvectors,
$\sum_{i=2}^{2n}e^{\lambda_i-\lambda_{1}}{\rm diag}\,(u_iu_i^T)$, may be 
non-negligible relative to the first term and therefore the resulting rankings
could differ significantly from those obtained using HITS. 
In section \ref{sec:applications} we will see examples of real networks illustrating
both scenarios.

Summarizing, use of the matrix exponential for ranking hubs and authorities
amounts to using the (squared) entries of {\em all} the eigenvectors of $\cal A$,
weighted by the exponential of the corresponding eigenvalues. 
Of course, in place of the exponential, a number of other functions
could be used; see the discussion in the next section. 
Although using an exponential
weighting scheme may at first sight appear to be arbitrary, its use can be
rigorously justified; see the discussion in the next section,
and \cite{EHB11} for a thorough treatment in
the context of undirected graphs. As shown above, the HITS ranking scheme 
uses the leading term only, corresponding to the approximation 
$e^{\mathcal A}\approx e^{\lambda_1}u_1u_1^T$. Between these two extremes
one could also use approximations of the form
\begin{equation}\label{k_terms}
e^{\mathcal A} \approx \sum_{i=1}^k e^{\lambda_i}u_iu_i^T\,,
\end{equation}
where $1< k < n$; indeed, in most cases of practical interest a modest
value of $k$ ($\ll n$)
 will be sufficient for a very good approximation, since the
eigenvalues of $\cal A$ are often observed to decay rapidly from a certain
index $k$ onward. We return on this topic in section \ref{sec:approx}.

\section{Other ranking schemes}
In this section we discuss a few other schemes that have been proposed
in the literature, and compare them with the hub and authority centrality
measures based on the exponential of $\cal A$.

\subsection{Resolvent-based measures}
Besides the matrix exponential, another function that has been successfully
used to define centrality and communicability measures for an undirected
network is the matrix
resolvent, which can be defined as
$$R(A;c) = (I - c A)^{-1} = I + cA + c^2 A^2 +\cdots + c^kA^k + \cdots  \,,$$
with $0 < c< 1/\lambda_{\max}(A)$. This approach was pioneered early
on by Katz \cite{Katz}, and variants thereof have since been used 
by numerous authors; see, e.g., 
\cite{bonchietal,Brandes,EHB11,NetworkProp,franceschet,vigna}.
Here $A$ is the symmetric adjacency matrix of the undirected network.
The condition on the parameter $c$ ensures that $R(A;c)$ is well 
defined (i.e., that $I - c A$ is invertible and the geometric series
converges to its inverse) and nonnegative; 
indeed, $I - cA$ will be a nonsingular $M$-matrix. It is
hardly necessary to mention the close relationship existing between the
resolvent and the exponential function, which can be expressed via the
Laplace transform. For the adjacency matrix $\cal A$ of a bipartite
graph given by (\ref{calA}), the resolvent
is easily determined to be
\begin{equation}\label{resolv}
R({\cal A};c) = 
\left( \begin{array}{cc} (I - c^2AA^T)^{-1} & cA(I - c^2A^TA)^{-1}\\
                         c(I - c^2A^TA)^{-1}A^T     & (I - c^2A^TA)^{-1}
                         \end{array} \right).
\end{equation}
The condition on $c$ can be expressed as $0 < c < 1/\sigma_1$, 
where $\sigma_1=\|A\|_2$
denotes the largest singular value of $A$, the adjacency matrix of the undirected 
network. This ensures that the matrix in (\ref{resolv}) is well-defined and
 nonnegative, with
positive diagonal entries. The diagonal entries of $(I - c^2AA^T)^{-1}$
provide the hub scores, those of $(I - c^2A^TA)^{-1}$ the authority scores. 
A drawback of this approach is the need to select
the parameter $c$, and the fact that different values of $c$ may lead
to different rankings. We have performed numerical experiments with this
approach and we found that for certain values of $c$, particularly 
those close to the upper limit $1/\sigma_1$, the hub and authority
rankings obtained with the resolvent function are not too different
from those obtained with the matrix exponential. However,
not surprisingly, as the value of $c$ is reduced, one obtains hub and
authority rankings that
are strongly correlated with the out- and in-degree of the nodes,
respectively.\footnote{Note that if $c$ is taken too small, then the resolvent
approaches the identity matrix and it becomes impossible to have 
meaningful rankings of the nodes.}
 Overall, because the resolvent tends to weigh short walks more
heavily than the exponential, and since longer walks contribute relatively
little to the centrality scores, it is fair to say that the exponential
is less \lq\lq degree biased\rq\rq\ than the resolvent function. Also,
since the exponential rankings do not depend on a tuneable parameter,
they provide unambiguous rankings.


We note that
\lq\lq Katz\rq\rq\  authority and hub scores may also be obtained by considering 
the column and row sums of the (nonsymmetric) matrix resolvent
$(I - c A)^{-1}$, where $A$ is the adjacency
matrix of the original digraph and $c>0$ is again assumed to
be small enough for the corresponding Neumann series to converge. 
Indeed, the row sums of $(I - c A)^{-1}$ count the number of
(weighted) walks out of each node, while the column sums count the
number of (weighted) walks into each node. Denoting by $\bf 1$ the
vector of all ones, hub and authority rankings can be obtained by solving the
two linear systems
\begin{equation}\label{katz}
(I - c A)y = {\bf 1} \quad {\rm and} \quad (I - c A^T)x = {\bf 1}\,,
\end{equation}
respectively. Here the parameter $c$ must satisfy $0 < c < 1/\rho(A)$,
where $\rho(A)$ denotes the spectral radius of $A$.
The results of numerical experiments comparing the Katz scores with those
based on the exponential of $\cal A$ are given in section \ref{sec:applications}.
Here we observe that these Katz scores are also dependent on the choice
of the parameter $c$, and similar considerations to those made for
$(I - c {\cal A})^{-1}$ apply.

A natural analogue to this approach is the use 
of row and column sums of the
exponential $e^A$ to rank hubs and authorities. Some results obtained with 
this approach are discussed in section \ref{sec:applications}. 
We note that this method 
is different from the {\em Exponentiated Inputs 
HITS Method} of \cite{EIHITS}.  
The latter method is a modification to HITS which was developed 
in order to correct the issue of non unique results in 
certain networks.  If the dominant eigenvalue of $A^TA$ 
(and, consequently, of $AA^T$) is not simple, then the 
corresponding eigenspace is multidimensional. This means 
that the choice of the initial vector affects the convergence 
of the HITS algorithm and different hub and authority vectors 
can be produced using different initial vectors. This can 
occur only when $A^{T} A$ is reducible, that is, when the original 
network is not strongly connected. In \cite{EIHITS}, 
Farahat {\em et al.}~propose a modification to the HITS 
algorithm which amounts to replacing $A$ and $A^T$ with 
$e^A-I$ and $(e^A-I)^T$ in the HITS iteration.  They note 
that, as long as the original network is weakly connected, 
the dominant eigenvalue of $(e^A-I)^T (e^A - I)$ is simple. 
Thus, HITS with this exponentiated input produces unique 
hub and authority rankings.  However, a result of this 
replacement is that nodes with zero in-degree (or a low in-degree) 
are less important in the calculation of authority scores 
than nodes with a high in-degree.   When there are many nodes 
with zero in-degree or whose edges point to only a few other nodes, 
dropping these edges can greatly affect the HITS rankings.
An obvious disadvantage of this algorithm is its cost, 
since it requires iterating with a matrix exponential and 
its transpose. It can be implemented using only matrix-vector 
products involving $A$ and $A^T$ by means 
of techniques, like Krylov subspace methods, for evaluating 
the action of a matrix function on a given vector; see, 
e.g., \cite[Chapter 13]{highambook}. This approach leads 
to a nested iteration scheme, with HITS as the outer iteration 
and the Krylov method as the inner one. 
Generally speaking, we have found HITS with exponentiated inputs
to be less reliable and more expensive than the other methods
considered in this paper. We refer
to \cite{TechReport} for additional discussion and
some examples.

\subsection{PageRank and Reverse PageRank}
As is well known, the (now) classical PageRank algorithm 
provides a means of finding
the authoritative nodes in a
digraph. In PageRank, the importance of a node $v$ is determined 
by the importance of the nodes pointing to $v$.  In the most 
basic formulation, the rank of $v$ is given by 
\begin{equation}
	r(v) = \sum_{u \in B_v} \frac{r(u)}{|u|}
\end{equation}
where $B_v = \{u:$ there is a directed link from $u$ to $v \}$ 
and $|u|$ is the out-degree of $u$.  The ranks of the nodes 
are computed by initially setting, say, $r(v) = \frac{1}{n}$ 
(where $n$ is the size of the network) and iteratively 
computing the rankings until convergence.  This can also be written as
\begin{equation}
	\pi_k^T = \pi_{k-1}^T P, \quad k=1,2,\ldots
\end{equation}
where $\pi_k$ is the vector of node ranks at iteration 
$k$ and $P$ is the matrix given by $$p_{ij}=\left\{\begin{array}{ll}
1/|v_i|,& \textnormal{ if  there is a directed edge from } 
v_i \textnormal{ to } v_j,\\
0, & \textnormal{ else. }
\end{array}\right .
$$
Here, $P$ can be viewed as a probability transition matrix, 
where $p_{ij}$ is the probability of traveling from node $v_i$ 
to node $v_j$ along an edge and the iterations can be understood 
as the evolution of a Markov chain modeling a random walk on 
the network.

However, for an arbitrary network, there is no guarantee 
that the PageRank algorithm will converge.  If there are 
nodes with zero out-degree, $P$ will not be stochastic.  
To correct this, the matrix $\bar{P}$ is used, where each 
zero row of $P$ is replaced with ${\bf e}^T/n$.  Although 
this guarantees that the algorithm will converge, it does 
not guarantee the existence of a unique solution.  Even 
with the augmentation, $\bar{P}$ might still be a reducible 
matrix, corresponding to a reducible Markov chain.  When this 
happens, there are rank sinks, i.e., nodes in which the 
random walk will become trapped and, subsequently, these 
nodes will receive a disproportionately high rank.  However, 
if $P$ were irreducible, there would be no rank sinks and 
the Perron-Frobenius theorem would guarantee that the Markov 
chain had a unique, positive stationary distribution.

The standard way to form a stochastic, irreducible 
PageRank matrix $\bar{\bar{P}}$ is to introduce the
rank-1 matrix $E={\bf ee}^T/n$ and to consider
instead of $\bar P$ the convex combination
\begin{equation}
	\bar{\bar{P}} = \alpha\bar{P} +(1-\alpha)E\,,
\end{equation}
where $\alpha$ is a constant with $0 < \alpha <1$.  
The coefficient $1-\alpha$ is a measure of the tendency 
of a person surfing the web to jump from one page to 
another without following links.  In practice, a 
frequently recommended value is $\alpha =0.85$.  
For a more comprehensive overview of the PageRank 
algorithm, see \cite{franceschet,Kam,IRSurvey,Pagerank}. 
 
It was pointed out in \cite{fogaras} that applying PageRank to the 
digraph obtained by reversing the direction of the edges provides a
natural way to rank the hubs; this is usually referred to as
{\em Reverse PageRank}. In other words, authority rankings are
obtained by applying PageRank to the \lq\lq Google\rq\rq\ matrix 
derived from $A$, and hub rankings are obtained by the same process
applied to $A^T$. 
 Like HITS, PageRank and Reverse PageRank are
eigenvector-based ranking algorithms that do not take into account
information about the network contained in the non-dominant eigenvectors.
As already mentioned,
it has been argued \cite{MiPapa} that eigenvector-based algorithms
tend to be degree-biased. 
Furthermore, like the Katz-type algorithms, the rankings obtained
depend on the choice of a tuneable damping parameter.
While the success of PageRank in finding authoritative nodes is
well known and very well documented, much less is known about the
effectiveness of Reverse PageRank in identifying hubs; some references
are \cite{Bar-Yossef,crofootetal,WuLi,WuLietal}. We present
the results of a few numerical experiments with PageRank and Reverse PageRank
in section \ref{sec:applications}.

\section{Examples}\label{expes}
In this section and the next we illustrate the proposed method on some simple networks
of small size, as well as on some larger data sets corresponding
to real networks. We also compare our approach with HITS and other
rankings schemes, including Katz, PageRank and Reverse PageRank.

\subsection{Small digraphs}
In this section we compare out and in-degree counts, HITS, and
our proposed method to obtain hub and authority rankings in a 
few small digraphs. The purpose of this section is mostly
pedagogical.

\subsubsection{Example 1}
Consider the small directed network in Fig.~\ref{graphs} (left panel).
The adjacency matrix is given by
$$A =  \left( \begin{array}{cccc} 0 & 1 & 1 & 0 \\ 1 & 0 & 1 &0 \\ 0 & 1 & 0 & 1 \\ 0 & 1 & 0 & 0 \end{array} \right).$$

\begin{figure}[t!]
\centering
\includegraphics[width=0.30\textwidth]{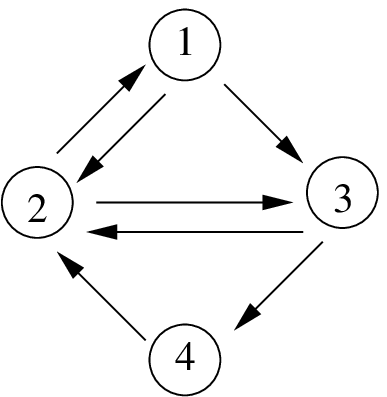}
\hspace{0.3in}
\includegraphics[width=0.22\textwidth]{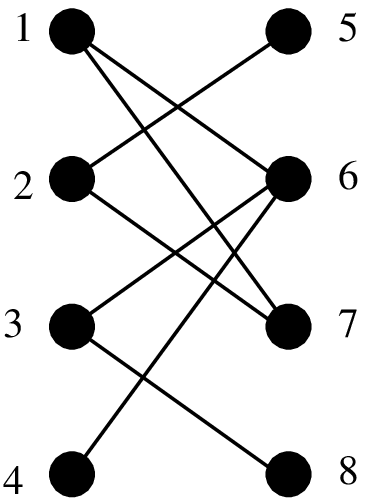}
\caption{The original directed network from Example 1, with adjacency matrix $A$ (left) 
and the bipartite network with adjacency matrix $\mathcal{A}$ (right).}
\label{graphs}
\end{figure}


The corresponding bipartite graph is shown in  Fig.~\ref{graphs} (right panel).
If hubs and authorities are determined simply using in-degree and out-degree 
counts, the result is as follows: 

\vspace{0.12in}

\begin{center}
\begin{tabular}{|c|c|c|}
\hline
node & out-degree & in-degree \\ \hline
1 & 2 & 1\\
2 & 2 & 3\\
3 & 2 & 2\\
4 & 1 & 1\\
\hline
\end{tabular}
\end{center}

\vspace{0.12in}

Under this ranking, the hub ranking of the nodes is: 
$\{1, 2, 3 {\rm \,(tie)}; 4\}$.  The authority ranking 
of the nodes is: $\{2;3; 1,4 {\rm \, (tie)}\}$. 
We obtain somewhat different results
using the HITS algorithm. The 
eigenvectors of $AA^T$ and $A^TA$ corresponding to the 
largest eigenvalue $\lambda_{\max} \approx 3.9563$, which is
simple, yield the
following rankings for hubs and authorities:

\vspace{0.12in}

\begin{center}
\begin{tabular}{|c|c|c|}
\hline
node & hub rank & authority rank \\ \hline
1 & .3383 & .0965 \\
2 & .1729 & .4618\\
3 & .2798 & .2854\\
4 & .2091 & .1562\\
\hline
\end{tabular}
\end{center}

\vspace{0.12in}

Here, the ranking for hubs is: $\{1;3;4;2\}$. 
The ranking for authorities is: $\{2;3;4;1\}$.
Note that node 2, which was given a top
hub score by looking just at the out-degrees, is judged
by HITS as the node with the lowest hub score. 

Using $e^{\mathcal{A}}$ as described above, the 
rankings for hub centralities and authority centralities are:

\vspace{0.12in}

\begin{center}
\begin{tabular}{|c|c|c|}
\hline
node & hub centrality = $[e^{\mathcal{A}}]_{ii}$ & authority centrality 
= $[e^{\mathcal{A}}]_{4+i,4+i}$  \\ \hline
1 & 2.3319 & 1.5906\\
2 & 2.2289 & 3.0209\\
3 & 2.2812 & 2.2796\\
4 & 1.6414 & 1.5922\\
\hline
\end{tabular}
\end{center}

\vspace{0.12in}

With this method, the hub ranking of the nodes is: $\{1;3;2;4\}$.  
The authority ranking is: $\{2;3;4;1\}$. On this example, our method produces  
the same authority ranking as HITS. The hub ranking, however, is slightly
different: both methods identify node 1 as the one with the highest hub
score, followed by node 3; however, our method assigns the lowest hub score
to node 4 rather than node 2. This is arguably a more meaningful
ranking. 

\subsubsection{Example 2}
Consider the small directed network in Fig.~\ref{graphs2} (left panel). The adjacency matrix is given by 
$$A =  \left( \begin{array}{cccc} 0 & 0 & 1 & 0 \\ 1 & 0 & 0 &1 \\ 0 & 1 & 0 & 0 \\ 0 & 1 & 0 & 0 \end{array} \right).$$

\begin{figure}[t!]
\centering
\includegraphics[width=0.30\textwidth]{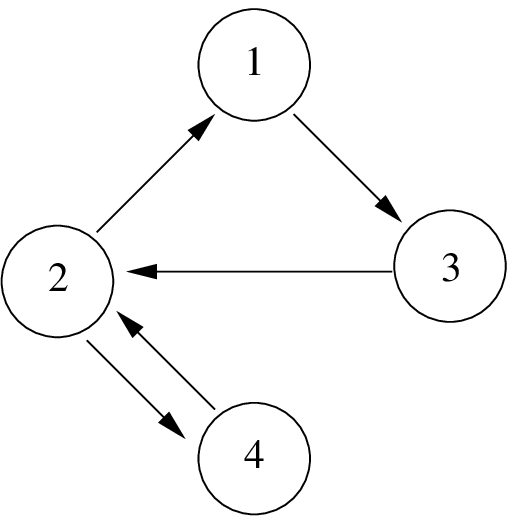}
\hspace{0.3in}
\includegraphics[width=0.21\textwidth]{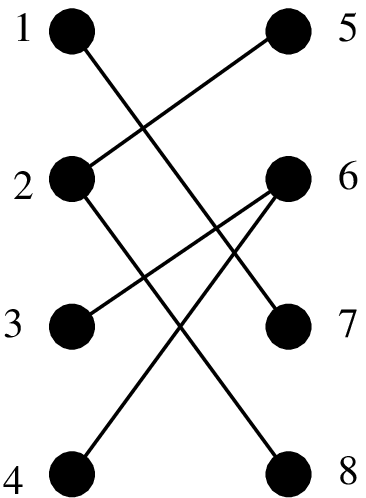}
\caption{The original directed network from Example 2, with adjacency matrix $A$ (left) 
and the bipartite network with adjacency matrix $\mathcal{A}$ (right).}
\label{graphs2}
\end{figure}

The corresponding bipartite graph is shown in Fig.~\ref{graphs2} (right panel).  If hubs and authorities are determined only using in-degrees and out-degrees, the result is as follows:

\vspace{0.12in}

\begin{center}
\begin{tabular}{|c|c|c|}
\hline
node & out-degree & in-degree \\ \hline
1 & 1 & 1\\
2 & 2 & 2\\
3 & 1 & 1\\
4 & 1 & 1\\
\hline
\end{tabular}
\end{center}

\vspace{0.12in}

Under this criterion, the hub and authority rankings are
both 
$\{2; 1, 3, 4\, {\rm (tie)}\}$.
While it is intuitive that 
node 2 should be given a high score (both as an authority
and as a hub), just looking at the degrees does not allow
one to distinguish the remaining nodes.

Consider now the use of HITS.  The largest eigenvalue 
of $AA^T$ (and $A^TA$) is $\lambda_{\max} =2$ and it has multiplicity two.  
Thus, different starting vectors for the HITS algorithm may produce different rankings,
as discussed in \cite{EIHITS}.  
Starting from a constant authority vector $x^{(0)}$,
as suggested in \cite{HITS},
produces the following scores:

\vspace{0.12in}

\begin{center}
\begin{tabular}{|c|c|c|}
\hline
node & hub rank & authority rank \\ \hline
1 & .0000 & .3333\\
2 & .5000 & .3333\\
3 & .2500 & .0000\\
4 & .2500 & .3333\\
\hline
\end{tabular}
\end{center}

\vspace{0.12in}

\noindent The ranking for hubs is: $\{2; 3, 4\, {\rm (tie)}; 1\}$.  The ranking for 
authorities is the following: 
$\{1, 2, 4\, {\rm (tie)}; 3\}$. 

If the ranking is determined using $e^{\mathcal{A}}$ as described above, 
the resulting scores are:

\vspace{0.12in}

\begin{center}
\begin{tabular}{|c|c|c|}
\hline
node & hub centrality = $[e^{\mathcal{A}}]_{ii}$ & authority 
centrality = $[e^{\mathcal{A}}]_{4+i,4+i}$  \\ \hline
1 & 1.5431 & 1.5891\\
2 & 2.1782 & 2.1782\\
3 & 1.5891 & 1.5431\\
4 & 1.5891 & 1.5891\\
\hline
\end{tabular}
\end{center}

\vspace{0.12in}

\noindent With this method, the hub ranking of the nodes is the same as in HITS: 
$\{2; 3,4\, {\rm (tie)}; 1\}$.  However, in the authority ranking, 
node 2 is the clear winner rather than being part of a three-way 
tie for first place: $\{2; 1, 4\, {\rm (tie)}; 3\}$. 
In this example, the method based on the matrix exponential is
able to identify a top authority node by making
use of additional spectral information. 

\subsection{Example 3}
\label{example3}
Let $G$ be the small directed network in Fig.~\ref{graphs3}.  The adjacency matrix is given by
$$A =  \left( \begin{array}{cccccc} 0 & 0 & 0 & 0 & 0 & 0 \\ 1 & 0 & 0 & 0 & 0 & 0 \\ 1 & 0 & 0 & 0 & 0 & 0 \\ 1 & 0 & 0 & 0 & 0 & 0 \\ 1 & 0 & 0 & 0 & 0 & 0 \\ 0 & 1 & 1 & 1 & 1 & 0 \end{array} \right).$$

\begin{figure}[t!]
\centering
\includegraphics[width=0.35\textwidth]{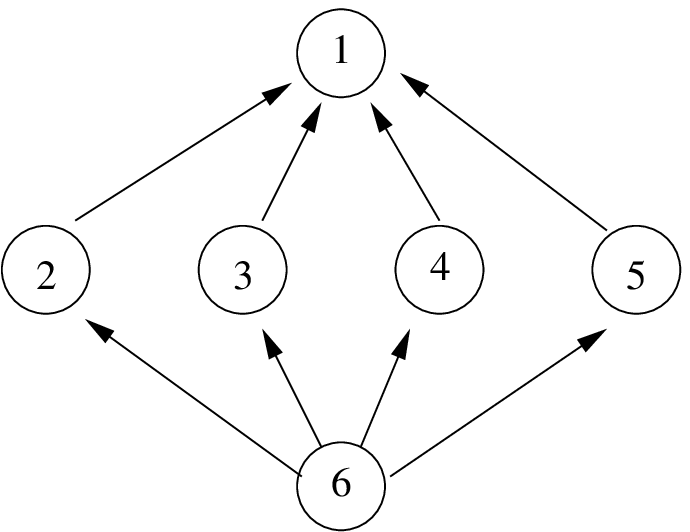}
\hspace{0.3in}
\includegraphics[width=0.2\textwidth]{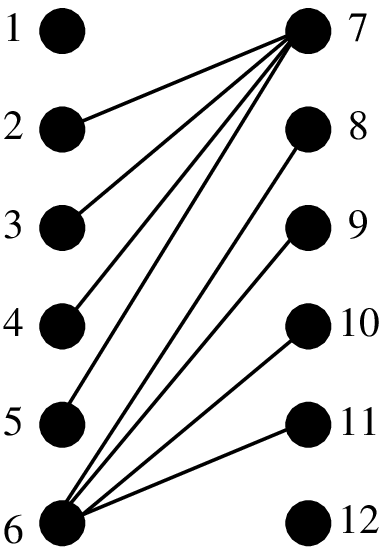}
\caption{The original directed network from Example 3, with adjacency matrix $A$ (left)
and the bipartite network with adjacency matrix $\mathcal{A}$ (right).}
\label{graphs3}
\end{figure}

If hubs and authorities are determined using only in-degrees and out-degrees, the result is:

\vspace{0.12in}

\begin{center}
\begin{tabular}{|c|c|c|}
\hline
node & out-degree & in-degree \\ \hline
1 & 0 & 4\\
2 & 1 & 1\\
3 & 1 & 1\\
4 & 1 & 1\\
5 & 1 & 1\\
6 & 4 & 0\\
\hline
\end{tabular}
\end{center}

\vspace{0.12in}

The hub ranking of the nodes using degrees is: \{6; 2,3,4,5 (tie); 1\}.  The authority ranking is \{1; 2,3,4,5 (tie); 6\}.

If the HITS algorithm is used, the resulting rankings are similar, 
but not exactly the same.  Starting with a constant authority vector 
$x^{(0)}$, the results are:

\vspace{0.12in}

\begin{center}
\begin{tabular}{|c|c|c|}
\hline
node & hub rank & authority rank \\ \hline
1 & .000 & .200 \\
2 & .125 & .200 \\
3 & .125 & .200 \\
4 & .125 & .200 \\
5 & .125 & .200 \\
6 & .500 & .000 \\
\hline
\end{tabular}
\end{center}

\vspace{0.12in}

The hub ranking of the nodes is: $\{6; 2,3,4,5\, {\rm (tie)}; 1\}$.  
The authority ranking is: \{1,2,3,4,5 (tie); 6\}.  Here, HITS does
not differentiate
between node 1 and nodes 2, 3, 4, and 5 in 
terms of the authority score, even though node 1 has by far the highest 
in-degree.
This appears as a failure of HITS, since it is intuitive that node 1
should be regarded as very authoritative.





When $e^{\mathcal{A}}$ is used to calculate the hub and authority scores, node 1 does get a higher authority ranking than all the other nodes:

\vspace{0.12in}

\begin{center}
\begin{tabular}{|c|c|c|}
\hline
node & hub centrality = $[e^{\mathcal{A}}]_{ii}$ & authority 
centrality = $[e^{\mathcal{A}}]_{6+i,6+i}$  \\ \hline
1 & 1.0000 & 3.7622 \\
2 & 1.6905 & 1.6905 \\
3 & 1.6905 & 1.6905 \\
4 & 1.6905 & 1.6905 \\
5 & 1.6905 & 1.6905 \\
6 & 3.7622 & 1.0000 \\
\hline
\end{tabular}
\end{center}

\vspace{0.12in}

Note that, if desired, the value 1 can be subtracted from these 
scores since it does not affect the relative ranking of the nodes.
The hub ranking is \{6; 2,3,4,5 (tie); 1\}, and 
the authority ranking is: \{1; 2,3,4,5 (tie); 6\}.

\section{Application to web graphs}\label{sec:applications}
Similarly to HITS, and in analogy to subgraph centrality for
undirected networks,  the rankings produced by the values 
on the diagonal of $e^{\mathcal{A}}$ can be used to 
rank websites as hubs and authorities in web searches 
(many other applications are of course also possible).
 Three of the data sets considered here are small web graphs consisting of
web sites on various topics and can be found at \cite{datasets} 
along with the website associated with each node;
see also \cite{LinkStructure}. 
The experiments for this paper were run on the ``Expanded" version of the data sets. 
Each data set is named after the corresponding
topic.\footnote{It should be noted, 
however, that in the node list for the adjacency matrix, 
the node labeling begins with 1 and in the list of websites 
associated with the nodes found at \cite{datasets}, node 
labeling begins at 0.  Thus, node $i$ in the adjacency 
matrix is associated with website $i-1$.} 
In addition, we include results for the {\em wb-cs-stanford}
data set from the University of Florida Sparse Matrix Collection
\cite{UFL}. This digraph represents a subset of the
 Stanford University web. In this section, 
the hub and authority rankings obtained from 
$e^{\mathcal{A}}$ are compared with those from HITS, 
Katz (using (\ref{katz}) with $c=1/(\rho(A) + 0.1)$),
the row and column sums of the exponential $e^A$ of
the nonsymmetric matrix $A$, and PageRank/Reverse PageRank. 
For the latter we use the standard value $\alpha = 0.85$ for the damping
parameter.
All experiments are performed using Matlab Version 7.9.0 
(R2009b) on a MacBook Pro running 
OS X Version 10.6.8, a 2.4 GHZ Intel Core i5 processor 
and 4 GB of RAM. For the purpose of these tests we use the
built-in Matlab function {\tt expm} to compute the matrix
exponentials, and backslash to compute the
Katz scores. Other approximations of $e^{\mathcal{A}}$ are 
discussed in section \ref{sec:approx}.

\begin{figure}[t!]
\centering
\includegraphics[width=0.8\textwidth]{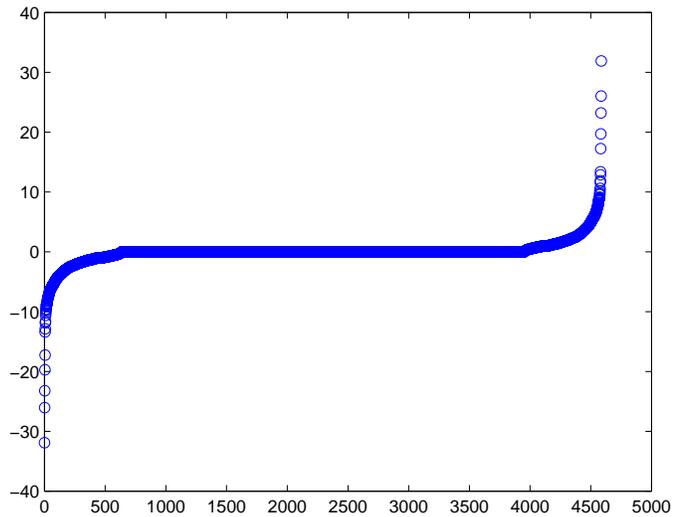}
\vspace{-.25in}
\caption{Plot of the eigenvalues of the expanded abortion matrix $\mathcal{A}$. }
\label{abortionevalues}
\end{figure}

\subsection{Abortion data set}

The {\em abortion} data set contains $n=2293$ nodes and $m=9644$ 
directed edges.  The expanded matrix $\mathcal{A} =  
\left( \begin{array}{cc} 0 & A \\ A^T & 0  \end{array} \right)$ 
has order $N=2n=4586$ and contains $2m=19288$ nonzeros.  
The maximum eigenvalue of $\mathcal{A}$ is $\lambda_{N} \approx 
31.91$ and the second largest eigenvalue is 
$\lambda_{N-1} \approx 26.04$.  
In this matrix, the largest eigenvalue is fairly 
well-separated from the second largest so that one would 
expect the HITS rankings (which only use information 
from the dominant eigenpair of $\cal A$) to be reasonably close to the 
rankings from $e^{\mathcal{A}}$ (which use information from 
all of the eigenvalues and corresponding eigenvectors). 
A plot of the eigenvalues of the expanded abortion data set 
matrix can be found in Fig.~\ref{abortionevalues}. 
Note the high multiplicity of the zero eigenvalue in this matrix, 
as well as in the adjacency matrices of the computational 
complexity and death penalty data sets. Also, quite a few 
of the nonzero eigenvalues are rather small. Due to this, the 
numerical rank of the matrix is very low, a property that can
be exploited when estimating the entries of $e^{\cal A}$ using Lanczos-based
methods; see section \ref{sec:approx} for further discussion on this.



\begin{table}[t!]
\centering
\caption{Top 10 hubs of the abortion
web graph, ranked using $[e^{\mathcal{A}}]_{ii}$, HITS, Katz, 
$e^A$ row sums and Reverse PageRank with $\alpha = 0.85$.}
\label{abortionhubs}
\begin{tabular}{|c|c|c|c|c|}
\hline
 $[e^{\mathcal{A}}]_{ii}$ &
HITS & Katz & $e^A$ rs & RPR \\
 \hline
48  &  48  & 80  & 80 & 125  \\
 \hline
 1021 & 1006  & 1431  & 1431  & 2184  \\
\hline
 1007 & 1007  & 1432 & 1432  & 79  \\
\hline
1006 & 1021 & 1387 & 1426 & 81  \\
\hline
1053 &  1053 & 1388  & 1425 & 48  \\
\hline
1020 & 1020 & 1389  & 1415  & 1424  \\
\hline
987 & 960 & 1397  & 1388  & 1447  \\
\hline
990 & 968 & 1425  & 1389  &  78  \\
\hline
985 & 969 & 1426  & 1397  & 134  \\
\hline
989 & 970 & 1415  & 1387  & 1445  \\
\hline
\end{tabular}
\end{table}

\begin{table}[t!]
\centering
\caption{Top 10 authorities of the abortion
web graph, ranked using $[e^{\mathcal{A}}]_{ii}$, HITS, Katz,
$e^A$ column sums and PageRank with $\alpha=0.85$.}
\label{abortionauthorities}
\begin{tabular}{|c|c|c|c|c|}
\hline
 $[e^{\mathcal{A}}]_{ii}$ & 
HITS & Katz & $e^A$ cs  & PR  \\
 \hline
 967 & 939 & 1430  & 1430  & 1609 \\
 \hline
 958 & 958 & 1387  & 1387  & 1941 \\
\hline
 939 & 967 & 1425  & 1425  & 1948  \\
\hline
 962 & 961 & 1426  & 1426  & 1608 \\
\hline
 963 & 962 & 1429  & 1417  & 587 \\
\hline
 964 & 963 & 1396  & 1409  & 1610 \\
\hline
 961 & 964 & 1405 & 1429  & 2045  \\
\hline
 965 & 965 & 1406  & 1406 &  317  \\
\hline
966 & 966 & 1409  & 1396 & 2191  \\
\hline
587 & 1582 & 1417  & 1405  & 753  \\
\hline
\end{tabular}
\end{table}

The top 10 hubs and authorities of the abortion data set, as determined 
using the diagonal entries of
$e^{\mathcal{A}}$, HITS with constant initial vector, 
the row/column sums of $(I - cA)^{-1}$ (\lq\lq Katz\rq\rq), the 
row/column sums of $e^A$ and Reverse PageRank/PageRank are shown in
Tables \ref{abortionhubs} and \ref{abortionauthorities}. 
We observe that there is a good deal of agreement between the $e^{\cal A}$
rankings
and the HITS ones: indeed, both methods identify the websites 
labeled 48, 1021, 1007, 1006, 1053, 1020 as the top 6 hubs, and
both pick web site $48$ as the top one. Also, there are 7 
web sites identified by both methods as being among the top 10 
authorities. The top authority identified by HITS is ranked third
by $e^{\mathcal{A}}$, and conversely the top authority identified by 
$e^{\mathcal{A}}$ is third in the HITS ranking. 
The Katz rankings and those based on $e^A$ show considerable
agreement with one another, but are very different 
from the HITS ones and from those based on $e^{\cal A}$. 
Node 48, which is the top-ranked hub according to HITS
and $e^{\cal A}$, is now not even among the top 100. Conversely,
node 80, which is ranked the top hub by Katz and $e^A$, is not
in the top 100 nodes according to HITS or to $e^{\cal A}$.
This is not too surprising, since the metrics based on $A$ and those
based on $\cal A$ are obtained by counting 
rather different types of graph walks. Finally, for this
network Reverse PageRank
and PageRank return rankings with almost no overlap with any of
the other methods. 

\begin{figure}[t!]
\centering
\includegraphics[width=0.8\textwidth]{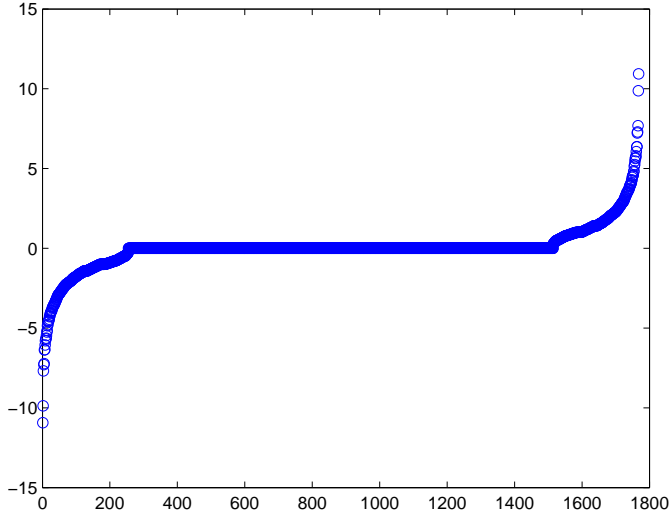}
\vspace{-0.25in}
\caption{Plot of the eigenvalues of the expanded computational 
complexity matrix $\mathcal{A}$. }
\label{compcomplexevalues}
\end{figure}

\subsection{Computational complexity data set}

The {\em computational complexity\/} data set contains 
$n=884$ nodes and $m= 1616$ directed edges.  
The expanded matrix $\mathcal{A}$ has order $N = 2n =1768$ and 
contains $2m=2232$ nonzeros.  The maximum eigenvalue of 
$\mathcal{A}$ is $\lambda_N \approx 10.93$ and the second 
largest eigenvalue is $\lambda_{N-1} \approx 9.86$.  Here, 
the (relative) spectral gap between the first and the second
eigenvalue is smaller than in the previous example; 
consequently, we expect the rankings produced 
using $e^{\mathcal{A}}$ and HITS to be less similar than for 
the abortion data set.  A plot of the eigenvalues of the 
expanded computational complexity data set matrix can be found 
in Fig.~\ref{compcomplexevalues}.



The top 10 hubs and authorities of the computational complexity 
data set, determined by the various ranking methods, can be found in Tables 
\ref{comphubs} and \ref{compauthorities}.
 As expected, we see
less agreement between HITS and the diagonals of $e^{\cal A}$. 
Concerning the
hubs, both methods agree that the web site labelled 57 is by far the most
important hub on the topic of computational complexity. However,
the method based on $e^{\cal A}$ identifies as the
second most important hub the web site corresponding to node 17, 
which is ranked only 39th by HITS. The two methods
agree on the next three hubs, but after that they return completely
different results. The difference is even more pronounced for the
authority rankings. The method based on $e^{\cal A}$ clearly
identifies web site 1 as the most authoritative one, whereas
HITS relegates this node to 8th place. The top authority
acording to HITS, web site 719, places 5th in the ranking 
obtained by $e^{\cal A}$. The two methods agree on only two
other web sites as being in the top 10 authorities (717 and 727). 
The Katz rankings and those based on $e^A$ show little overlap
for this data set, although node 57 is clearly considered an
important hub by all measures. A natural question is how much 
these results are affected by the choice of the parameter $c$
used to compute the Katz scores. We found experimentally that,
in contrast to the situation for the other data sets, small
changes in the value of $c$ can significantly affect the Katz
ranking for this particular data set. Changing the value of $c$
to $c = 1/(\rho(A) + 0.3)$ results in hub and authority rankings
that are much closer to those given by the column/row sums of
$e^A$. The potential sensitivity to $c$ is a clear drawback of the Katz-based
approach compared to the methods based on the matrix exponential.
Coming to (Reverse) PageRank, it is interesting to note that for
this data set it provides rankings that are at least in partial
agreement with some of the other measures, especially those
based on $e^A$. Looking at the authority scores, we also notice
a good degree of overlap among all methods, except HITS. Due to 
the small spectral gap, HITS is probably the least reliable of all
ranking methods on this particular data set.

\begin{table}[t!]
\centering
\caption{Top 10 hubs of the computational complexity
web graph, ranked using $[e^{\mathcal{A}}]_{ii}$, HITS, Katz, 
$e^A$ row sums and Reverse PageRank with $\alpha=0.85$.}
\label{comphubs}
\begin{tabular}{|c|c|c|c|c|}
\hline
 $[e^{\mathcal{A}}]_{ii}$ &
HITS & Katz & $e^A$ rs & RPR \\
 \hline
57  &  57  & 56  & 57  & 57  \\
 \hline
17 & 634  & 709  & 56 & 56  \\
\hline
644 & 644  & 57  & 17  & 17  \\
\hline
643 & 721 & 697  & 51  & 51  \\
\hline
634 & 643 & 705  & 634  & 21  \\
\hline
106 & 544 & 690  & 21  & 11  \\
\hline
119 & 632 & 714  & 255  & 255  \\
\hline
529 & 801 & 708   & 173  &  12  \\
\hline
86 & 640 & 712  & 709 & 13  \\
\hline
162 & 639 & 715  & 45 & 45  \\
\hline
\end{tabular}
\end{table}

\begin{table}
\centering
\caption{Top 10 authorities of the computational complexity
web graph, ranked using $[e^{\mathcal{A}}]_{ii}$, HITS, Katz,
$e^A$ column sums and PageRank with $\alpha=0.85$.}
\label{compauthorities}
\begin{tabular}{|c|c|c|c|c|}
\hline
 $[e^{\mathcal{A}}]_{ii}$ & 
HITS & Katz & $e^A$ cs  & PR  \\
 \hline
 1 & 719 & 688 & 673  & 673 \\
 \hline
 315 & 717 & 685  & 1  & 664 \\
\hline
 673 & 727 & 673  & 664  & 534  \\
\hline
 148 & 723 & 690 & 534  & 45 \\
\hline
 719 & 808 & 56  & 45  & 2 \\
\hline
 717 & 735 & 686  & 473  & 1 \\
\hline
 2 & 737 & 664 & 315  & 376  \\
\hline
 45 & 1 & 1 & 376 &  341  \\
\hline
727 & 722 & 45  & 688  & 50  \\
\hline
534 & 770 & 534  & 599  & 51  \\
\hline
\end{tabular}
\end{table}


\begin{figure}[t!]
\centering
\includegraphics[width=0.8\textwidth]{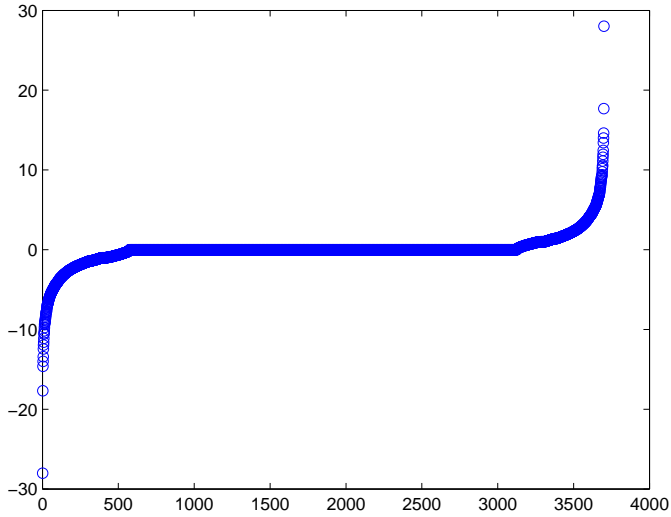}
\vspace{-0.25in}
\caption{Plot of the eigenvalues of the expanded death penalty matrix $\mathcal{A}$. }
\label{deathpenaltyevalues}
\end{figure}

\subsection{Death penalty data set}

The {\em death penalty} data set contains $n=1850$ and $m=7363$ 
directed edges.  The expanded matrix $\mathcal{A}$ has order $N=2n=3700$ 
and contains $m=14726$ nonzeros.  The maximum eigenvalue of 
$\mathcal{A}$ is $\lambda_N \approx 28.02$ and the second largest 
eigenvalue $\lambda_{N-1} \approx 17.68$.  In this case, the largest 
and second largest eigenvalues are quite far apart, and the relative 
gap is larger than in the previous examples.  
A plot of the eigenvalues of the
expanded death penalty matrix can be found in Fig.~\ref{deathpenaltyevalues}.

Due to the presence of a large spectral gap, 
much of the information used in forming the 
rankings of $e^{\mathcal{A}}$ is also used in the HITS ranking,
and we expect the two methods to produce similar results; see
section \ref{hits_comp}.
Indeed, as shown in Table \ref{deathhubs} (hubs) and
Table \ref{deathauthorities} (authorities),    
in this case the top 10 rankings produced by the
two methods are actually identical.

Looking at the Katz scores and those based on $e^A$, we see in
this case a great deal of overlap between these two, but almost
completely different rankings compared to HITS and $e^{\cal A}$ 
(although node 210 is clearly an important hub by any
standard). Note that node 1632 is both the top hub and the
top authority according to Katz and to $e^A$. PageRank and
Reverse PageRank show a limited amount of overlap with the other 
measures; nevertheless, nodes 210 and 1632 are also found 
to be important hubs and nodes 1632, 1 and 4 are found to be
authoritative, in agreeemnt with some of the other measures.

\subsection{Stanford web graph}
The {\em wb-cs-stanford} data set from the University of Florida sparse 
matrix collection contains $n=9914$ nodes and $m=36854$ directed edges.  
The expanded matrix $\mathcal{A}$ has order $N=2n=19828$ and contains $m=73708$ nonzeros.  The maximum
eigenvalue of $\mathcal{A}$ is $\lambda_N \approx 38.38$ and the 
second largest is $\lambda_{N-1} \approx 32.12$, hence there
is a sizeable gap.
Tables \ref{stanfordhubs}-\ref{stanfordauthorities} 
report the results obtained with the various ranking schemes.

\begin{table}[t!]
\centering
\caption{Top 10 hubs of the death penalty
web graph, ranked using $[e^{\mathcal{A}}]_{ii}$, HITS, Katz, 
$e^A$ row sums and Reverse PageRank with $\alpha=0.85$.}
\label{deathhubs}
\begin{tabular}{|c|c|c|c|c|}
\hline
 $[e^{\mathcal{A}}]_{ii}$ &
HITS & Katz & $e^A$ rs & RPR \\
 \hline
210  & 210  & 1632  & 1632  & 210  \\
 \hline
637 & 637  & 133  & 133  & 1632  \\
\hline
413 & 413  & 1671  & 1671 & 70  \\
\hline
1586 & 1586 & 552  & 552 & 95  \\
\hline
552 & 552 & 1651  & 1651  & 135  \\
\hline
462 & 462 & 1673  & 210 & 133  \\
\hline
930 & 930 & 1328  & 1673  & 55  \\
\hline
542 & 542 & 1653  & 1653  & 958  \\
\hline
618 & 618 & 210  & 1328  & 1077  \\
\hline
1275 & 1275  & 1709  & 1709 & 315  \\
\hline
\end{tabular}
\end{table}

\begin{table}
\centering
\caption{Top 10 authorities of the death penalty
web graph, ranked using $[e^{\mathcal{A}}]_{ii}$, HITS, Katz,
$e^A$ column sums and PageRank with $\alpha=0.85$.}
\label{deathauthorities}
\begin{tabular}{|c|c|c|c|c|}
\hline
 $[e^{\mathcal{A}}]_{ii}$ & 
HITS & Katz & $e^A$ cs  & PR  \\
 \hline
 4 & 4 & 1632   & 1632  & 993 \\
 \hline
 1 & 1 & 1662 & 1662  & 667 \\
\hline
 6 & 6 & 1697  & 1697  & 3  \\
\hline
 7 & 7 & 1689  & 1689  & 736 \\
\hline
 10 & 10 & 1653  & 1653  & 735 \\
\hline
 16 & 16 & 1671   & 1671  & 1632 \\
\hline
 2 & 2 & 1675  & 1675 & 42  \\
\hline
 3 & 3 & 1684  & 1684 &  1  \\
\hline
44 & 44 & 798 & 789  & 4  \\
\hline
27 & 27 & 1652  & 1654  & 1212  \\
\hline
\end{tabular}
\end{table}

The first thing to observe is the remarkable agreement
between the HITS, $e^{\cal A}$, Katz, and $e^A$ rankings
of both hubs and authorities. This in stark contrast with
the results for the previous three data sets. Moreover,
many of the nodes that are ranked highly as hubs are also ranked
highly as authorities.  A plausible
explanation of these observations is that the adjacency matrix $A$
for this digraph is much closer to being symmetric than
in the other cases. Indeed, the percentage of \lq\lq bidirectional\rq\rq\
edges in the wb-cs-stanford graph is $47.63\%$; the
corresponding percentages for the abortion, computational
complexity and death penalty graphs are just $2.72\%$, $2.97\%$
and $4.02\%$, respectively. 

Interestingly, the (Reverse) PageRank results are now drastically
different fron the ones provides by all the other measures in nearly
all cases. 
The only (partial) exception is that PageRank finds nodes 6837, 6839 and 6840 
to be among the top 10 authorities; these three
nodes are identified as the three most authoritative ones by the
remaining methods.

\begin{table}[t!]
\centering
\caption{Top 10 hubs of the wb-cs-stanford
web graph, ranked using $[e^{\mathcal{A}}]_{ii}$, HITS, Katz, 
$e^A$ row sums and Reverse PageRank with $\alpha=0.85$.}
\label{stanfordhubs}
\begin{tabular}{|c|c|c|c|c|}
\hline
 $[e^{\mathcal{A}}]_{ii}$ &
HITS & Katz & $e^A$ rs & RPR \\
 \hline
6562  & 6562 & 6562 & 6562 & 251  \\
 \hline
6838 & 6838  & 6837 &  6837 & 252  \\
\hline
6840 & 6837  & 6838  & 6838 & 253  \\
\hline
6837 & 6839 & 6839  & 6839 & 254  \\
\hline
6839 & 6840 & 6840  & 6840 & 271  \\
\hline
6616 & 6616 & 6669  & 6669 & 2240  \\
\hline
6765 & 6615 & 6668  & 6668  & 2241  \\
\hline
6615 & 6765 & 6670  & 6670  & 2242  \\
\hline
6669 & 6669 & 6616  & 6616  & 2243   \\
\hline
6731 & 6731  & 6615  & 6615 & 348  \\
\hline
\end{tabular}
\end{table}

\begin{table}
\centering
\caption{Top 10 authorities of the wb-cs-stanford
web graph, ranked using $[e^{\mathcal{A}}]_{ii}$, HITS, Katz,
$e^A$ column sums and PageRank with $\alpha=0.85$.}
\label{stanfordauthorities}
\begin{tabular}{|c|c|c|c|c|}
\hline
 $[e^{\mathcal{A}}]_{ii}$ & 
HITS & Katz & $e^A$ cs  & PR  \\
 \hline
6837 & 6837 & 6837   & 6837  & 2264 \\
 \hline
 6840 & 6839 & 6839  & 6839  & 8226 \\
\hline
 6839  & 6840 & 6840  & 6840  & 8059  \\
\hline
 6838 & 6838 & 6838  & 6838   & 8057 \\
\hline
 6617 & 6617 & 6573  & 6573  & 4485 \\
\hline
 6615 & 6615 & 6574  & 6575  & 5707 \\
\hline
 6766 & 6614  & 6575  & 6576 & 8225  \\
\hline
 6764 & 6616 & 6576  & 6577 & 6837  \\
\hline
 6616 & 6764 & 6577  & 6578  & 6839  \\
\hline
 6614 & 6766 & 6578  & 6579  & 6840  \\
\hline
\end{tabular}
\end{table}

\section{Approximating the matrix exponential}
\label{sec:approx}

Several approaches are available for computing the matrix exponential 
\cite{highambook}. A commonly used scheme is the one based on
Pad\'e approximation combined with the scaling and squaring
method \cite{higham,highambook}, implemented in Matlab by the
{\tt expm} function. For an $n\times n$ matrix,
this method requires $O(n^2)$ storage and $O(n^3)$ arithmetic
operations; any sparsity in $A$, if present, is not exploited in
currently available implementations. Evaluation of the matrix exponential
based on diagonalization also requires $O(n^2)$ storage and $O(n^3)$
operations. Furthermore, these methods cannot be easily adapted
to the case where only selected entries (e.g., the diagonal
ones) of the matrix exponential are of interest.

For the purpose of ranking hubs and authorities in a directed network,
only the main diagonal
of $e^{\cal A}$ is required. This can be done without having to
compute {\em all} the entries in $e^{\cal A}$. If some of the off-diagonal entries
(communicabilities) are desired, for example those between the highest
ranked hubs and/or authorities, it is also possible to compute them without
having to compute the whole matrix $e^{\cal A}$, which would be prohibitive
even for a moderately large network. We further emphasize that in most 
applications one is not so much interested in computing an exact ranking of
{\em all} the nodes in a digraph, but only in identifying the top $k$ ranked
nodes, where the integer $k$ is small compared to $n$ (for example,
$k=10$ or $k=20$). It is highly desirable to develop methods that are
capable of quickly identifying the top $k$ hubs/authorities without having
to compute accurate hub/authority scores for each node.

Efficient, accurate methods for estimating (or, in some cases, bounding) 
arbitrary entries in
a matrix function $f(A)$ have been developed by Golub, Meurant and collaborators
(see \cite{golubmeurantbook} and references therein) and first applied
to problems of network analysis by Benzi and Boito in \cite{BenziBoito};
see also \cite{bonchietal}.
Here we limit ourselves to a brief description of these methods, referring the reader
to \cite{BenziBoito} and \cite{golubmeurantbook}
 for further details. Let $A$ be a real, symmetric, $n\times n$ matrix
and let $f$ be a function defined on the spectrum of $A$.
 Consider the eigendecompositions
$A=Q \Lambda Q^T$ and $f(A)=Qf(\Lambda)Q^T$, where
$Q=[\phi_1, \ldots, \phi_n]$ and 
$\Lambda = {\rm diag}\,(\lambda_1,\ldots ,\lambda_n)$; here we assume
that the eigenvalues of $A$ are ordered as
$\lambda_1 \le \ldots \le \lambda_n$. For given vectors $u$ and $v$ we have
\begin{equation}
u^T f(A) v=u^T Q f(\Lambda) Q^T v=w^Tf(\Lambda)z=\sum_{k=1}^n f(\lambda_k)w_kz_k,
\end{equation}
where $w=Q^Tu=(w_k)$ and $z=Q^T v=(z_k)$.
In particular, for $f(A)=e^A$ we obtain
\begin{equation}
u^T e^A v= \sum_{k=1}^n e^{\lambda_k}w_kz_k.
\label{exp3}
\end{equation}
Choosing $u=v=e_i$ (the vector with the $i$th entry equal to 1 and all the
remaining ones equal to $0$) we obtain an expression for the subgraph
centrality of node $i$:
$$ SC(i) := \sum_{k=1}^n e^{\lambda_k}\phi_{k,i}^2\,,$$
where $\phi_{k,i}$ denotes the $i$th component of vector $\phi_k$.
Likewise, choosing $u=e_i$ and $v=e_j$ we obtain the following expression
for the communicability between node $i$ and node $j$:
$$ C(i,j) := \sum_{k=1}^n e^{\lambda_k}\phi_{k,i}\phi_{k,j}.$$
Analogous expressions hold for other matrix functions, such as the resolvent.


Hence, the problem is reduced to evaluating bilinear
expressions of the form $u^T f(A) v$. 
Such bilinear forms can be thought of as Riemann- Stieltjes integrals
with respect to a (signed) spectral measure:
$$
u^T f(A) v=\int_a^bf(\lambda)d\mu(\lambda),\quad
\mu(\lambda)=\left\{\begin{array}{ll}
0,& \textnormal{ if }\lambda<a=\lambda_1,\\
\sum_{k=1}^iw_kz_k, & \textnormal{ if }\lambda_{i}\leq\lambda < \lambda_{i+1},\\
\sum_{k=1}^n w_kz_k, & \textnormal{ if }b=\lambda_n\leq\lambda.
\end{array}\right .
$$
This integral can be approximated by means of a Gauss-type quadrature rule:
\begin{equation}
\int_a^bf(\lambda)d\mu(\lambda)=\sum_{j=1}^pc_jf(t_j)+\sum_{k=1}^qv_kf(\tau_k)+R[f],
\label{quadrature}
\end{equation}
where $R[f]$ denotes the error. Here the nodes $\{t_j\}_{j=1}^p$ and the 
weights $\{c_j\}_{j=1}^p$ are unknown, whereas
the nodes $\{\tau_k\}_{k=1}^q$ are prescribed. We have

\begin{itemize}
\item $q=0$ for the Gauss rule,
\item $q=1$, $\tau_1=a$ or $\tau_1=b$ for the Gauss--Radau rule,
\item $q=2$, $\tau_1=a$ and $\tau_2=b$ for the Gauss--Lobatto rule.
\end{itemize}

For certain matrix functions, including the exponential and the resolvent,
these quadrature rules can be used to obtain lower and upper bounds on the
quantities of interest; prescribing additional 
quadrature nodes leads to tighter and tighter
bounds, which (in exact arithmetic) converge monotonically to the true values 
\cite{golubmeurantbook}.
The evaluation of these quadrature rules is mathematically equivalent to the
computation of orthogonal polynomials via a three-term recurrence, or,
equivalently, to the computation of entries and spectral information of a certain
tridiagonal matrix via the Lanczos
algorithm. Here we briefly recall how this can be done for the case of
the Gauss quadrature rule, when we wish to estimate
the $i$th diagonal entry of $f(A)$.
It follows from (\ref{quadrature}) that the quantity
of interest has the form $\sum_{j=1}^pc_jf(t_j)$.
This can be computed from the relation
(Theorem 3.4 in \cite{golubmeurantbook}):
$$
\sum_{j=1}^pc_jf(t_j)=e_1^Tf(J_p)e_1,
$$
where
\begin{displaymath}
J_p=\left(\begin{array}{ccccc}
\omega_1 & \gamma_1 &&&\\
\gamma_1 & \omega_2 & \gamma_2 &&\\
& \ddots & \ddots & \ddots &\\
&& \gamma_{p-2} & \omega_{p-1} & \gamma_{p-1}\\
&&& \gamma_{p-1} & \omega_p \end{array}\right)
\end{displaymath}
is a tridiagonal matrix whose eigenvalues are the
Gauss nodes, whereas the Gauss weights are given
by the squares of the first
entries of the normalized eigenvectors of $J_p$.
The entries of $J_p$ are computed using the Lanczos algorithm
with starting vectors
$x_{-1}=0$ and $x_0=e_i$. 
Note that it is not required to compute all the
components of the eigenvectors of $J_p$ if one
uses the Golub--Welsch QR algorithm; 
see \cite{golubmeurantbook}.

For small $p$ (i.e., for a small number of Lanczos steps), computing the
$(1,1)$ entry of $f(J_p)$ is inexpensive.
The main cost in estimating one entry of $f(A)$ with this approach
is associated with the sparse matrix-vector multiplies in the Lanczos
algorithm applied to the
adjacency matrix $A$. If only a small, fixed number of iterations are performed
for each diagonal element of $f(A)$, as is usually the case,
the computational cost (per node) is
at most $O(n)$ for a sparse graph, resulting
in a total cost of $O(n^2)$ for computing the subgraph centrality
of every node in the network. If only $k< n$ subgraph centralities 
are wanted, with $k$ independent of $n$, then
the overall cost of the computation will be $O(n)$ provided   
that sparsity is carefully exploited in the Lanczos algorithm
and that only a small number $p$ of iterations (independent of $n$)
is carried out. Note, however, that depending on the connectivity
characteristics of the network under consideration, the prefactor
in the $O(n)$ estimate could be large. The algorithm can be
implemented so that the storage requirements are $O(n)$ for a
sparse network---that is, a network in which the total number of links
grows linearly in the number $n$ of nodes.


When applying the approach based on Gauss quadrature rules
to the $2n\times 2n$ matrix $\cal A$, only matrix-vector
products with $A$ and its transpose are required, 
just like in the HITS algorithm. 
If only the hub scores are wanted, it is also possible to apply the
described techniques to the symmetric matrix $AA^T$ using the
function $f(\lambda) = \cosh(\sqrt{\lambda})$; the same applies
if only the authority scores are wanted, working this time with the matrix
$A^TA$. The problem with this approach is that only estimates (rather
than increasingly accurate lower and upper bound) can be obtained,
due to the fact that the function $f(\lambda) = \cosh(\sqrt{\lambda})$
is not strictly completely monotonic on the positive real axis. We refer
to \cite{BenziGolub} for details. In our experiments we always work
with the matrix $\cal A$, since we are interested in computing both
hub and authority scores.

\subsection{Test results}
\label{sec:tests}
Accurate evaluation of {\em all} 
the diagonal entries of $e^{\mathcal{A}}$ using quadrature
rules may be too expensive for
truly large-scale graphs. In most applications,
fortunately, it is not necessary to rank all the nodes in
the network: only the top few hubs and authorities are likely
to be of interest. 
When using quadrature rules, the number of quadrature nodes 
(Lanczos iterations) required to correctly rank 
the nodes as hubs or authorities varies and depends on 
both the eigenvalues of $e^{\mathcal{A}}$ and how close 
the diagonal entries are in value.  If the rankings of 
the nodes are
very close, it can take many iterations for the ordering 
to be exactly determined.  However, since estimates for diagonal 
entries are calculated individually, once the top 10 (say) 
nodes have been identified, additional iterations can be 
performed only on these nodes in order to determine their exact ranking.

\begin{table}[t!]
\centering
\caption{The number of iterations necessary for the top 10 
hubs or authorities to be determined (not necessarily in 
the correct order).}
\label{intop10}
\vspace{-0.2in}
\subtable{
\begin{tabular}{|c|c|c|}
\hline
 Dataset & hub (lower bound) & hub (upper bound)  \\
 \hline
Abortion & $>40$ & $>40$ \\
Comp. Complex. & 3  & 3  \\
Death Penalty & 5 & 3  \\
Stanford      & 8 & 8 \\
\hline
\end{tabular}
}
\subtable{
\vspace{1in}
\begin{tabular}{|c|c|c|}
\hline
 Dataset & authority (lower bound) & authority (upper bound)  \\
 \hline
Abortion & 2  & 2  \\
Comp. Complex. & 4 & 5   \\
Death Penalty & 4 & 2  \\
Stanford      & 7 & 8 \\
\hline
\end{tabular}
}
\end{table}

Our approach exploits the monotonicity of the Gauss-Radau bounds:
as soon as the lower bound for node $i$ is above the upper bounds for
other nodes, we know that node $i$ will be ranked higher than those
othe nodes. This observation leads to a simple algorithm for
identifying the top-$k$ nodes.
The number of Lanczos iterations per node necessary 
to identify the top $k=10$ hubs and authorities, using Gauss-Radau 
lower and upper bounds, for the four data sets from section 
\ref{sec:applications} is given in Table \ref{intop10}.  
Our implementation is based on Meurant's Matlab code \cite{mmq},
From the table it can be seen that, in most cases, only 
2-5 iterations per node are needed. An exception is the determination
of the top 10 
hubs of the abortion data set, for which the number 
of iterations is large ($>40$).  
This is due to a cluster of nodes (nodes 960 and 968-990) 
that have nearly identical hub rankings. These nodes' 
scores agree to 15 significant digits.
However, for  most applications, if
a subset of nodes are so closely ranked, their 
exact ordering may not be so important.  Table 
\ref{intop30} reports the number of Lanczos iterations neeeded for the 
top $k=10$ hubs and authorities to be ranked at least in the top 30.  
Here, the number of iterations per node needed is never more than 7.
The total cost is thus $O(n)$ Lanczos iterations, again leading to an
$O(n^2)$ overall complexity.
Various enhancements can be used
to reduce costs,  
including the use of sparse-sparse mat-vecs
in the Lanczos iteration, and the
exclusion of nodes with zero out-degree (for hub computations)
and zero in-degree (for authority computations) from the 
top-$k$ calculations. It is also safe to assume that in most 
cases of interest, one can also
exclude nodes with in- and out-degree 1 from the computations,
leading to further savings.

\begin{table}[t!]
\centering
\caption{The number of iterations necessary for the top 10 hubs or 
authorities to be ranked in the top 30.}
\label{intop30}
\vspace{-0.2in}
\subtable{
\begin{tabular}{|c|c|c|}
\hline
 Dataset & hub (lower bound) & hub (upper bound)  \\
 \hline
Abortion & 5 & 4 \\
Comp. Complex. & 2 & 2  \\
Death Penalty & 2  & 2  \\
Stanford      & 7 & 4 \\
\hline
\end{tabular}
}
\subtable{
\begin{tabular}{|c|c|c|}
\hline
 Dataset & authority (lower bound) & authority (upper bound)  \\
 \hline
Abortion & 2 & 2   \\
Comp. Complex. & 4 & 2   \\
Death Penalty & 2  & 2 \\
Stanford      & 2 & 4 \\
\hline
\end{tabular}
}
\end{table}

\section{Conclusions and outlook}
In this paper we have presented a new approach to ranking hubs and
authorities in directed networks using functions of matrices. 
Bipartization is used to transform the original directed network
into an undirected one with twice the number of nodes. The adjacency
matrix of the bipartite graph is symmetric, and this allows the
use of subgraph centrality (and communicability) measures for undirected
networks. We showed that the diagonal entries of the matrix exponential  
provide hub and authority rankings, and we gave an interpretation
for the off-diagonal entries (communicabilities). Unlike HITS, the results are
independent of any starting vectors; and unlike the Katz-based ranking
schemes, there is no dependency on an arbitrary parameter.

Several examples, both synthetic and corresponding to real data sets,
have been used to demonstrate the effectiveness of the proposed
ranking algorithms relative to HITS and to other ranking schemes
based on the matrix resolvent and on the exponential of the
adjacency matrix  of the original digraph. 
Our experiments indicate that our method
results in rankings that are frequently different from
those computed by HITS, at least in the absence of large gaps
between the dominant singular value of the adjacency matrix
$A$ and the remaining ones. This is to be expected,
since our method uses information from all the singular spectrum of the
network, not just the dominant left and right singular pairs.

As usual in this field, there is no simple way to compare different
ranking schemes, and therefore it is impossible to state with certainty
that a ranking scheme will give \lq\lq better\rq\rq\ results than a 
different scheme in practice.
 It is, however, certainly the case that the method
based on the exponential of $\cal A$ takes into account more spectral
information than HITS does; moreover, the rankings so obtained are
unambiguous, in that they do not depend on an the choice of an initial
guess or on a tuneable parameter. As we saw, the latter is a weak spot
of Katz-like methods, and a similar case can be made for 
PageRank and Reverse PageRank. 

Compared to HITS, the new technique has
a higher computational cost.
We showed how Gaussian quadrature rules
can be used to quickly identify the top ranked
hubs and authorities for networks involving
thousands of nodes. We note that such schemes
require a symmetric input matrix and are
not readily applicable to nonsymmetric matrices,
since in this case one can only hope for estimates 
instead of lower and upper bounds.  

Future work should include a more efficient implementation
and tests on larger networks. It is likely
that the proposed approach based on Gaussian quadrature will prove to be
too expensive for truly large-scale networks with millions of nodes. We
hope to explore techniques similar to those presented in \cite{bonchietal}
and \cite{dhillon} in order to extend our methodology to truly large-scale
networks. Another relevant question is the study of
the rate of convergence of the
Lanczos algorithm for estimating bilinear forms associated with adjacency
matrices of graphs of different types. 
 
\section*{Acknowledgments}
We are indebted to Tammy Kolda (Sandia National Laboratories) 
and David Gleich (Purdue University) for valuable
suggestions.

\end{document}